
\documentclass{amsart}
\usepackage{amsmath}
\usepackage{amssymb} 
\usepackage{amscd} 
\usepackage{graphicx}   
\usepackage{xcolor}
\usepackage{marvosym}
\usepackage{float}
\usepackage{ascmac}
\usepackage{wrapfig}
\usepackage{epsfig}
\usepackage{tikz}

\usepackage{hyperref}
\hypersetup{
	colorlinks=true,
	linktoc=all,
    	linkcolor={red!50!black},
   	citecolor={blue!50!black},
  	urlcolor={blue!80!black}
	} 

\allowdisplaybreaks 

\tikzset{
  pt/.style={insert path={node[scale=1.5]{.}}},
  dnup/.style={insert path={ [pt] .. controls +(0,1em) and +(0,-1em) .. +(#1em,2em) [pt]}},
  dndn/.style={insert path={ [pt] .. controls +(0,1em) and +(0,1em) .. +(#1em,0) [pt]}},
  upup/.style={insert path={ [pt] .. controls +(0,-1em) and +(0,-1em) .. +(#1em,0) [pt]}},
}


\usepackage{rotating}

\usepackage{booktabs} 

\usepackage{array}
\newcolumntype{M}[1]{>{\centering\arraybackslash}m{#1}} 

\newcommand{\R}{{\mathbb R}}

\newcommand{\Z}{\mathbb Z}
\newcommand{\N}{\mathbb N}

\newcommand{\nbhd}{\mathcal{N}}
\newcommand{\bdry}{\partial}

\newtheorem{theorem}{Theorem}[section]
\newtheorem{lemma}[theorem]{Lemma}

\newtheorem{corollary}[theorem]{Corollary}
\newtheorem{claim}[theorem]{Claim}

\newtheorem*{SatCNC}{The Satellite Crossing Number Conjecture}

\newtheorem*{StabCNC}{The Stable Crossing Number Conjecture}
\newtheorem*{EnhancedStabCNC}{The Enhanced Stable Crossing Number Conjecture}
\newtheorem*{WeakStabCNC}{The Weak Stable Crossing Number Conjecture}
\newtheorem*{StabBIC}{The Stable Braid Index Conjecture}
\newtheorem*{WeakStabBIC}{The Weak Stable Braid Index Conjecture}
\newtheorem*{thm:SCN}{Theorem~\ref{thm:SCN}}
\newtheorem*{thm:stablecrossing}{Theorem~\ref{thm:stablecrossing}}
\newtheorem*{thm:satellite}{Theorem~\ref{thm:satellite}}
\newtheorem*{cor:coherent}{Corollary~\ref{cor:coherent}}
\newtheorem*{cor:noncoherent}{Corollary~\ref{cor:noncoherent}}
\newtheorem*{thm:stablebraid}{Theorem~\ref{thm:stablebraid}}
\newtheorem*{cor:boundsoncrossing}{Corollary~\ref{cor:boundsoncrossing}}

\theoremstyle{definition}
\newtheorem{definition}[theorem]{Definition}
\newtheorem{example}[theorem]{Example}

\newtheorem{remark}[theorem]{Remark}   

\numberwithin{equation}{section}
\numberwithin{figure}{section}
\numberwithin{table}{section}

\definecolor{dartmouthgreen}{rgb}{0.05, 0.5, 0.06}

\renewcommand{\)}{\textup{)}}

\usepackage{fullpage}

\setcounter{tocdepth}{2} 

\begin{document}
\baselineskip 14pt

\title{
The stable crossing number of a twist family of knots \\ and the satellite crossing number conjecture}

\author[K.L. Baker]{Kenneth L. Baker}
\address{Department of Mathematics, University of Miami, 
Coral Gables, FL 33146, USA}
\email{k.baker@math.miami.edu}

\author[K. Motegi]{Kimihiko Motegi}
\address{Department of Mathematics, Nihon University, 
3-25-40 Sakurajosui, Setagaya-ku, 
Tokyo 156--8550, Japan}
\email{motegi.kimihiko@nihon-u.ac.jp}

\begin{abstract}
Twisting a given knot $K$ about an unknotted circle $c$ a full $n \in \N$ times, 
we obtain a ``twist family'' of knots $\{ K_n \}$.  Work of Kouno-Motegi-Shibuya implies that for a non-trivial twist family
the crossing numbers $\{c(K_n)\}$ of the knots in a twist family grows unboundedly. However potentially this growth is  rather slow and may never become monotonic.  Nevertheless, based upon the apparent diagrams of a twist family of knots, one expects the growth should eventually be linear.
Indeed we conjecture that 
if $\eta$ is the geometric wrapping number of $K$ about $c$, then the crossing number of $K_n$ grows like $n \eta(\eta-1)$ as 
$n \to \infty$. 
To formulate this, we introduce the ``stable crossing number'' of a twist family of knots and establish the conjecture for 
(i) coherent twist families where the geometric wrapping and algebraic winding of $K$ about $c$ agree
and
(ii) twist families with wrapping number $2$ subject to an additional condition. 
Using the lower bound on a knot's crossing number in terms of its genus via Yamada's braiding algorithm, we bound the stable crossing number from below using the growth of the genera of knots in a twist family.  (This also prompts a discussion of the ``stable braid index''.) 
As an application, we prove that
highly twisted satellite knots in a twist family where the companion is twisted as well satisfy the Satellite Crossing Number Conjecture. 
\end{abstract}

\maketitle

 \renewcommand{\thefootnote}{}
 \footnotetext{2020 \textit{Mathematics Subject Classification.}
 Primary 57K10, Secondary 57K14} 
 \footnotetext{ \textit{Key words and phrases.}
 crossing number, stable crossing number, twisting, braid index}

\tableofcontents


\section{Introduction}
\label{intro}

Crossing number is the most elementary knot invariant. 
It measures an `appearance complexity' of knots with the following distinguished property that leads to its effective use in knot tabulation:
For any given integer $n > 0$ there are only finitely many knots $K$ whose crossing number is smaller than $n$.

Let $K$ be a knot in $S^3$.
We may assume $K$ is disjoint from two points $\pm\infty \in S^3$ so that $K \subset S^3 - \{\pm \infty\} = S^2 \times \mathbb{R}$.  Then for a generic projection $p \colon S^2 \times \mathbb{R} \to S^2$ 
we may assume that $p|_K \colon K \to S^2$ is one-to-one except for finitely many double points where 
$p(K)$ crosses itself once transversely. 
Assign ``over/under'' information at each double point according to the height of the two preimage points to obtain a \textit{knot diagram} $D$ of  $K$.
A double point with over/under information is called a \textit{crossing point} of $D$ and the number of crossing points of $D$ is denoted by $c(D)$. 
Then the \textit{crossing number} $c(K)$ of $K$ is defined as
\[
c(K) = \mathrm{min}\{ c(D) \mid D\ \textrm{is a diagram of}\ K \}.
\] 

In spite of the simplicity of its definition, 
crossing number is notoriously intractable in general.
For instance, although it seems very natural to expect, the following conjecture and its two strengthenings remains widely open.
\begin{SatCNC}
Let the knot $K$ be a non-trivial satellite of a knot $k$. 
\begin{itemize}
	\item Then $c(K) \geq c(k)$. (Problem 1.67 \cite{Kir})
	\item Furthermore, if $K$ is a winding number $\omega \geq 1$ satellite of $k$, then $c(K) \geq \omega^2 c(k)$ \cite{He}.
	\item More strongly, if $K$ is a wrapping number $\eta \ge 1$ satellite of $k$, then $c(K) \ge \eta^2 c(k)$.\)

\end{itemize}
\end{SatCNC}

Important progress was made by Lackenby \cite{Lac_sate}, 
who proves that $\displaystyle c(K) \ge \frac{1}{10^{13}}c(k)$.
For details and related results, see Lackenby \cite{Lac_sum} and \cite{FH, KL, Ito, BMT_Mazur}. 

\medskip

We are intrigued by the behavior of crossing numbers under twisting. 

Given a knot $K$ in $S^3$ and a disjoint unknot $c$, then for each integer $n$ doing $-1/n$ Dehn surgery on $c$ produces a knot $K_{c,n}$ in $S^3$.  
Effectively, $K_{c,n}$ is the result of twisting $K$ about $c$ a full $n$ times.  
Thus we have a \textit{twist family} of knots $\{ K_{c,n} \}$ indexed by the integers $n$ where $K=K_{c,0}$; see Figure~\ref{fig:twist_def}.  
With the twisting circle $c$ understood, we henceforth drop it from the notation and speak of the twist family $\{K_n\}$.

\begin{figure}[!ht]
\includegraphics[width=0.45\linewidth]{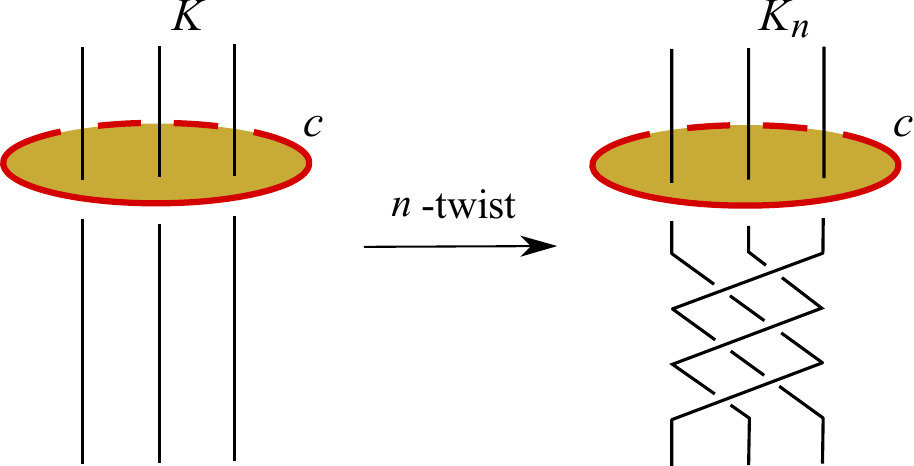}
\caption{Twisting knots and twisted region of a knot diagram}
\label{fig:twist_def}
\end{figure}

For a twist family of knots $\{K_n\}$ with twisting circle $c$, the \textit{wrapping number} $\eta$  and the \textit{winding number} $\omega$ of $K$ about $c$ measure the minimal geometric and algebraic intersection numbers of $K$ with a disk bounded by $c$.  
We choose orientations so that $\omega$ is non-negative.  
In Definition~\ref{defn:norm} we introduce the {\em meridional norm} $x$ of the twist family, the Thurston norm of the homology class dual to the meridian of $c$ in the exterior of $K\cup c$.
With Lemma~\ref{lem:meridionalnorm} it follows that 
\[0 \leq \omega \leq x+1 \leq \eta \qquad \mbox{and} \qquad \omega \equiv x+1 \equiv \eta \pmod{2}.\]   
Note that if $\eta = 0$ or $1$, then $K_{n}=K$ for all $n \in\Z$. Hence we regard twist families with $\eta = 0$ or $1$ as trivial.
Thus we henceforth implicitly assume $\eta \geq 2$ unless otherwise stated.

Observe that the crossing numbers of knots in a non-trivial twist family with must grow.
For a given integer $n$, 
there are only finitely many integers $m$ such that 
$K_m$ is isotopic to $K_n$ by \cite[Theorem~3.2]{KMS}. 
On the other hand, for a given integer $C$, 
there are only finitely many knots whose crossing number is less than or equal to $C$. 
Hence we obtain
\begin{lemma}
\label{lem:knot_types}
Let $\{K_n\}$ be a twist family with wrapping number $\eta \geq 2$.
For any given constant $C > 0$, 
there is a constant $N_C > 0$ such that $c(K_n) > C$ for all integers $n$ with $|n|  \ge N_C$. \qed
\end{lemma}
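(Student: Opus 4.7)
The plan is to formalize the two-line argument already sketched in the paragraph preceding the statement: combine the finiteness of knots below a given crossing number with the Kouno--Motegi--Shibuya finiteness of the preimage of each knot type in a non-trivial twist family.

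First I would fix $C>0$ and let $\mathcal{K}_C = \{K' \subset S^3 : c(K') \leq C\}$. This set is finite, since any diagram realizing the crossing number has at most $C$ crossings, and there are only finitely many isotopy classes of knots admitting such a diagram (a standard knot-tabulation fact). Next, for each $K' \in \mathcal{K}_C$, I would invoke \cite[Theorem~3.2]{KMS} to conclude that the set
\[
S(K') := \{ n \in \Z : K_n \text{ is isotopic to } K' \}
\]
is finite; this is where the assumption $\eta \geq 2$ is essential, because it guarantees the twist family is non-trivial and hence that the KMS theorem applies.

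Finally, I would take the finite union $S := \bigcup_{K' \in \mathcal{K}_C} S(K')$, set
\[
N_C := 1 + \max_{n \in S} |n|
\]
(with the convention that the max over the empty set is $0$), and observe that for any $n$ with $|n| \geq N_C$ the knot $K_n$ is not isotopic to any element of $\mathcal{K}_C$, so $c(K_n) > C$, as desired.

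There is no real obstacle here: the lemma is a straightforward combination of two black-box finiteness results, and the only subtlety is ensuring that the hypothesis $\eta \geq 2$ is used exactly to invoke \cite[Theorem~3.2]{KMS}.
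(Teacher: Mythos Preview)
Your proposal is correct and follows exactly the same approach as the paper: the lemma is stated with a \qed and its proof is the two-sentence argument in the preceding paragraph, combining the finiteness of knot types with crossing number at most $C$ with the Kouno--Motegi--Shibuya finiteness result \cite[Theorem~3.2]{KMS}. Your write-up simply makes that argument explicit.
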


\begin{figure}[!ht]
\includegraphics[width=0.9\linewidth]{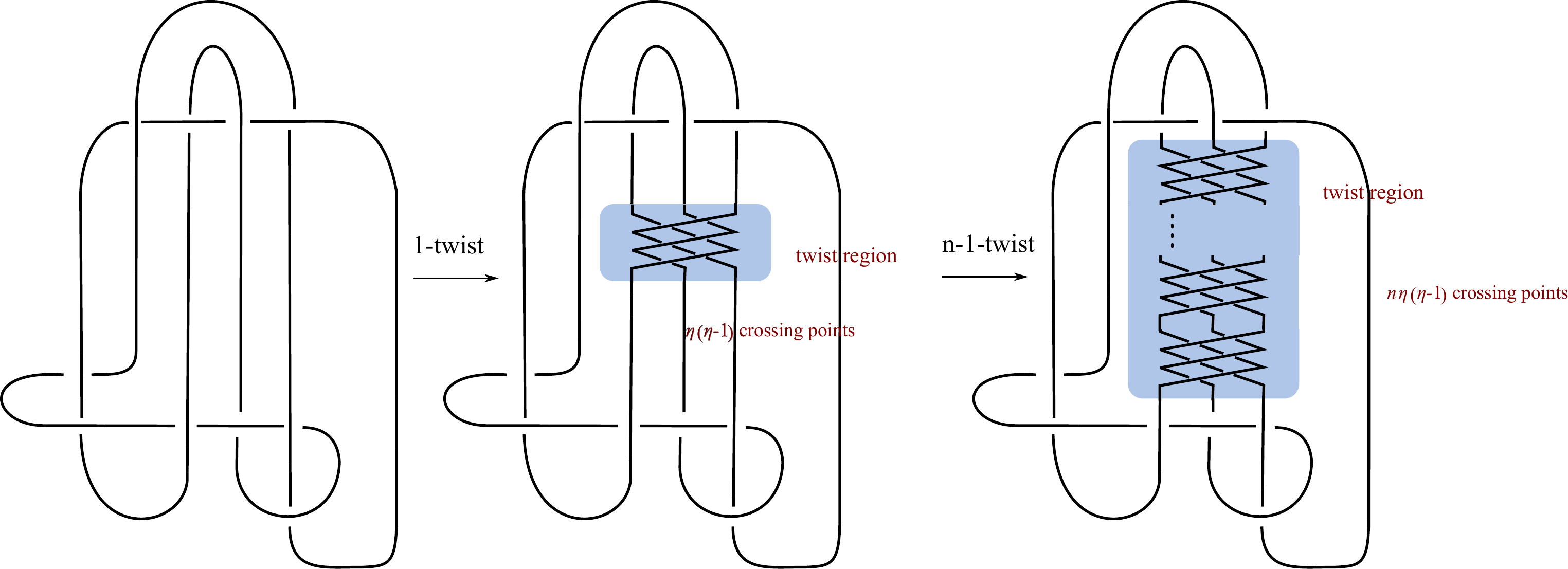}
\caption{The twist region of the diagram}
\label{fig:twisting_region}
\end{figure}

We would like to quantify this growth. 
A twist family $\{K_n\}$ has a diagram in which the twisting circle is planar and bounds a disk that the projection of $K$ crosses in $\eta$ parallel arcs as shown in Figure~\ref{fig:twist_def}.  
A diagram of $K_n$ for $n \geq 0$ is obtained by fully twisting these arcs $n$ times. 
Counting the crossing number arising in the twisted region (Figure~\ref{fig:twisting_region}) of the diagram of $K_n$, one observes that $\varlimsup_{n\to \infty} c(K_n) \leq c(K) + n\eta(\eta-1)$.
This prompts examining the asymptotic behavior of the sequence of crossing numbers ---or indeed of any invariant--- of the knots in a twist family.

\begin{definition}[Stable crossing number]
\label{defn:SCN}
Given a twist family of knots $\{K_n\}$, let $\{c(K_n)\}$ be the corresponding sequence of crossing numbers.
Define
\[\overline{c_s}(K_n) = \varlimsup_{n \to \infty} \frac{c(K_n)}{n} \qquad \mbox{and} \qquad \underline{c_s}(K_n) = \varliminf_{n \to \infty} \frac{c(K_n)}{n}.\]
Should these two be equal, define the {\em stable crossing number} of $\{K_n\}$ to be 
\[ c_s(K_n) = \lim_{n \to \infty} \frac{c(K_n)}{n}. \]
\end{definition}

In this notation, the previous observation becomes $\overline{c_s}(K_n) \leq \eta(\eta-1)$.  
We propose that this is not just an upper bound, but actually the stable crossing number.

\begin{StabCNC}
Let $\{ K_n \}$ be a twist family of knots with wrapping number $\eta$. 
Then the stable crossing number exists and is $c_s(K_{n}) = \eta(\eta-1)$.
\end{StabCNC}

In fact, we expect the following more precise conjecture. 

\begin{EnhancedStabCNC}
\label{enhanced}
 Let $\{ K_n \}$ be a twist family
of knots with wrapping number $\eta$. 
Then for sufficiently large $n$, $c(K_{n+1}) = c(K_{n}) + \eta(\eta-1)$. 
\end{EnhancedStabCNC}

A weaker version of the Stable Crossing Number Conjecture asserting linear growth should be more attainable.

\begin{WeakStabCNC}
Let $\{ K_n \}$ be a twist family of knots with wrapping number $\eta>1$. 
Then $\underline{c_s}(K_{n}) >0$.
\end{WeakStabCNC}

Note that Lemma~\ref{lem:knot_types} does not imply the Weak Stable Crossing Number Conjecture.  
For example, possibly $c(K_n) < \sqrt{n} \varepsilon$ for large enough $n$ and some $\varepsilon>0$ so that $\underline{c_s}(K_{n}) = 0$.

Our main result addresses the Stable Crossing Number Conjecture, 
confirming it for {\em coherent} twist families, those with winding equal to wrapping. 
(Birman-Menasco might say such families are type 0 or type 1 \cite{BiMe} while Nutt would call them NRS for `non-reverse string' \cite{Nutt}.  
Non-coherent twist families would be either type $k$ for $k\geq2$ or RS for `reverse string'.)

\begin{thm:stablecrossing}
Let $\{ K_n \}$ be a twist family of knots with winding number $\omega$, meridional norm $x$, and wrapping number $\eta$.  
Then we have the following.  
\begin{enumerate}
\item If $\omega = \eta$, then $c_s(K_n) = \eta(\eta-1)$.
\item If $\omega < \eta$, then 
$\omega(\omega+1) 
\le 
\omega x 
\le 
\underline{c_s}(K_n)
\le \overline{c_s}(K_n)
\le \eta(\eta-1)$.
\end{enumerate}
\end{thm:stablecrossing}

The proof of Theorem~\ref{thm:stablecrossing} shows a refinement of the stable behavior of crossing number for coherent twist families.

\begin{cor:boundsoncrossing}
Let $\{ K_n \}$ be a twist family of knots with winding number $\omega$ and wrapping number $\eta$. 
\begin{enumerate}
\item If $\omega=\eta$, then there exist constants $C,\ C'$ such that
\[
C' + n\eta(\eta-1) \le c(K_n) \le C +  n\eta(\eta-1)
\]
for all integers $n \geq 0$.
\item If $\omega<\eta$, then there exist constants $C,\ C'$ such that  
\[
C' + n\omega(\omega\ +\ 1) \le c(K_n) \le C +  n\eta(\eta-1)
\]
for all integers $n \geq 0$.
\end{enumerate}
\end{cor:boundsoncrossing}

If $K$ is a satellite of a non-trivial knot $k$ with wrapping and winding numbers $\eta$ and $\omega$, 
then there is a solid torus neighborhood $V$ of $k$ that contains $K$ in its interior. 
We may view $(V,K)$ as a {\em pattern} $P$ for the satellite operation so that $K=P(k)$ and $K$ has wrapping and winding numbers $\eta$ and $\omega$ in $V$.  
Letting $c$ be a meridian of $V$ (and hence a meridian of $k$), twisting $K$ about $c$ produces the twist family $\{K_n\}$ of satellite knots of $k$. 
One may also view this as forming a twist family of 
patterns $\{P_n\}$ so that $K_n =P_n(k)$.

Lemma~\ref{lem:knot_types} implies that 
Satellite Crossing Number Conjecture holds true for ``sufficiently twisted patterns''. 

\begin{theorem}
\label{thm:hightwistedcrossing}
For any knot $k$ and any pattern $P$ with wrapping and winding numbers $\eta$ and $\omega$ such that $\eta\geq 2$, 
there exists a constant $N$ so that for every integer $n \geq N$, the satellite knot $K_n = P_n(k)$ satisfies 
\[c(K_n) > \eta^2 c(k) \ge \omega^2 c(k).\]
In particular, if $\omega \neq 0$, then for every integer $n \geq N$
\[
\pushQED{\qed} 
c(K_n) > c(k). \qedhere
\popQED
\]
\end{theorem}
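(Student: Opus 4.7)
The plan is to deduce the theorem as an essentially immediate corollary of Lemma~\ref{lem:knot_types}. The twist family of interest is $\{K_n = P_n(k)\}$, obtained by twisting $K = P(k)$ about the meridian $c$ of the solid torus $V$ containing the pattern $P$.

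First I would verify the wrapping-number hypothesis of Lemma~\ref{lem:knot_types}: namely, that the wrapping number of $K$ about $c$ in $S^3$ is at least $2$. Since $P$ has wrapping number $\eta \geq 2$ in $V$, a standard 3-manifold argument (isotope any disk in $S^3$ bounded by $c$ so that its intersection with $\partial V$ consists of essential curves, note that any such curve must be isotopic to $c$ on $\partial V$ because an annulus in $V$ or in $E(k)$ has parallel boundary slopes, and then observe that the disk decomposes into meridian disks of $V$ joined by annuli) shows that this $S^3$-wrapping number equals the pattern wrapping number $\eta$. Hence the twist family qualifies for the lemma.

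I would then apply Lemma~\ref{lem:knot_types} with the specific constant $C = \eta^2 c(k)$: this yields an integer $N$ so that $c(K_n) > \eta^2 c(k)$ for every $n$ with $|n| \geq N$, in particular for every $n \geq N$. The inequality $\eta^2 c(k) \geq \omega^2 c(k)$ is immediate from $\eta \geq \omega \geq 0$, establishing the full chain $c(K_n) > \eta^2 c(k) \geq \omega^2 c(k)$. For the concluding statement, when $\omega \neq 0$ one has $\omega \geq 1$ and hence $\omega^2 c(k) \geq c(k)$, so $c(K_n) > c(k)$ for every $n \geq N$.

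No step presents a genuine obstacle; the theorem is a bookkeeping specialization of Lemma~\ref{lem:knot_types} to $C = \eta^2 c(k)$. The only real topological content is the wrapping-number identification in the first step, which is a standard observation about meridional compressing disks of a solid torus.
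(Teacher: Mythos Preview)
Your proposal is correct and follows exactly the paper's approach: the paper states Theorem~\ref{thm:hightwistedcrossing} as an immediate corollary of Lemma~\ref{lem:knot_types} (note the \qed\ at the end of the statement and the sentence ``Lemma~\ref{lem:knot_types} implies that Satellite Crossing Number Conjecture holds true for `sufficiently twisted patterns'\,'' just before it), and you simply spell out the specialization $C=\eta^2 c(k)$ together with the routine verification that the wrapping number of $K$ about the meridian $c$ in $S^3$ agrees with the pattern wrapping number $\eta$. That verification tacitly uses that $k$ is non-trivial (so that $\partial V$ is incompressible in $E(k)$), which is consistent with the paper's standing hypothesis in the paragraph introducing the setup for this theorem.
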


In the setting of Theorem~\ref{thm:hightwistedcrossing}  
twisting about $c$ does not affect the knot type of the companion knot $k$ since it is a meridian, and hence 
$c(k_n) = c(k)$ is a constant.  

\medskip

We may also apply Theorem~\ref{thm:stablecrossing} to address the Satellite Crossing Number Conjecture for certain twist families of satellite knots, 
in which both the satellite knot $K$ and its companion knot $k$ are twisted.

Let $K$ be a satellite of a knot $k$ in $S^3$ so that $K$ is contained in the interior of a solid torus neighborhood $V$ of $k$. 
Let $c$ be an unknot disjoint from $V$.  
Then twisting $K$ and $k$ simultaneously about $c$ yields the twist families $\{K_n\}$ and $\{k_n\}$ of a satellite knot $K_n$ with companion $k_n$ for each integer $n$.  
The Satellite Crossing Number Conjecture would imply that $c(K_n) > c(k_n)$ for all integers $n$.

Let $\eta_K$ and $\omega_K$ be the wrapping and winding numbers of $K$ about $c$, and
let $\eta_k$ and $\omega_k$ be the wrapping and winding numbers of $k$ about $c$. 
Viewing $K \subset V$ as a {\em pattern} $P$ for the satellite operation, let $\eta_P$ and $\omega_P$ be the wrapping and winding numbers of $K$ about the meridian of $V$. (Alternatively, $\eta_P$ is the minimal geometric intersection number of $K$ and a meridian disk of $V$ while $\omega_P$ is the algebraic intersection number of $K$ and a meridian disk of $V$.) One readily observes from a homological argument that $\omega_K = \omega_k \omega_P$; Claim~\ref{claim:wrap_satellite} shows that $\eta_K=\eta_k \eta_P$.

\begin{figure}[h]
\centering
\includegraphics[width=.4\textwidth]{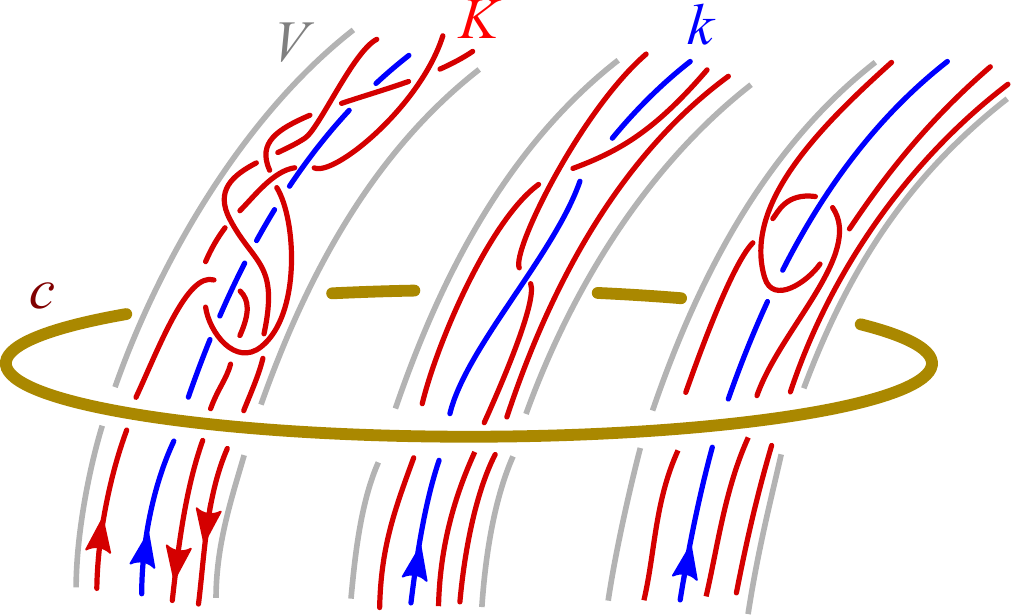}
\caption{Twisting satellite knots; $(\omega_k, \eta_k) = (3, 3)$ and $(\omega_P, \eta_P) = (1, 3)$.}
\label{twisting_satellite_2}
\end{figure}

\begin{thm:satellite}
With the above setup, suppose that  
$\omega_{P} \ge \eta_k/\omega_k$ and $\eta_{P} \ge 2$. 
Then there exists a constant $N > 0$ such that 
$c(K_n) > c(k_n)$ and 
$c(K_{-n}) > c(k_{-n})$ 
for all $n \ge N$. 

Furthermore, if $k$ is coherent and  the pattern has winding number $\omega_P \ge 2$, then there exists a constant $N' >0$ such that 
$c(K_n) > \omega_P^2 c(k_n)$ and $c(K_{-n}) > \omega_P^2 c(k_{-n})$ for all $n \geq N'$.
\end{thm:satellite}

\medskip

In particular, we have: 

\begin{corollary}
\label{coherent_satellite}
Assume that $k$ and the pattern $P$ are both coherent. 
Then we have a constant $N > 0$ such that $c(K_n) > \eta_P^2 c(k_n)$ and 
$c(K_{-n}) > \eta_P^2 c(k_{-n})$ 
for all $n \ge N$.  \qed
\end{corollary}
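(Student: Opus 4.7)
The plan is to obtain Corollary~\ref{coherent_satellite} as an essentially immediate consequence of Theorem~\ref{thm:satellite}, with the role of coherence being to tighten the inequalities in both the hypotheses and the conclusion of that theorem. The main step is simply to verify that coherence of $k$ and of $P$ forces the two hypotheses of Theorem~\ref{thm:satellite} and also promotes the second (Furthermore) clause into the stated bound $c(K_n) > \eta_P^2\, c(k_n)$.

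First I would translate the coherence assumptions into the relations $\omega_k = \eta_k$ and $\omega_P = \eta_P$. Using the identity $\omega_K = \omega_k\omega_P$ together with Claim~\ref{claim:wrap_satellite}'s formula $\eta_K = \eta_k\eta_P$, this has the pleasant side effect that $K$ itself is coherent about $c$ (that is, $\omega_K = \eta_K$), which is reassuring but not needed directly. The first hypothesis $\omega_P \geq \eta_k/\omega_k$ of Theorem~\ref{thm:satellite} becomes $\omega_P \geq 1$, which holds since $\omega_P = \eta_P \geq 2$ by our standing nontriviality convention $\eta \geq 2$ for twist families. The second hypothesis $\eta_P \geq 2$ is exactly that convention. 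Thus Theorem~\ref{thm:satellite} applies and yields some $N > 0$ with $c(K_{\pm n}) > c(k_{\pm n})$ for all $n \geq N$.

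Next I would invoke the Furthermore clause of Theorem~\ref{thm:satellite}: since $k$ is coherent and the pattern satisfies $\omega_P = \eta_P \geq 2$, there is some $N' > 0$ so that $c(K_{\pm n}) > \omega_P^2\, c(k_{\pm n})$ for all $n \geq N'$. Because $\omega_P = \eta_P$, the right-hand side is exactly $\eta_P^2\, c(k_{\pm n})$, which is the desired conclusion. Taking (if one wishes) the $N'$ produced here as the final $N$ completes the proof.

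There is no genuine obstacle here; the corollary is a direct specialization. The only subtle point to flag for the reader is that coherence of the pattern is precisely what turns the $\omega_P^2$ coefficient supplied by Theorem~\ref{thm:satellite} into the $\eta_P^2$ coefficient appearing in the statement, so the corollary is genuinely a statement about the wrapping number rather than just the winding number.
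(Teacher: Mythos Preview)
Your proposal is correct and is exactly the approach the paper intends: the corollary carries a \qed\ in the paper because it follows immediately from the ``Furthermore'' clause of Theorem~\ref{thm:satellite}, using that coherence of $P$ gives $\omega_P=\eta_P\ge 2$ (so the hypotheses are met) and converts the factor $\omega_P^2$ into $\eta_P^2$.
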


Our proof of Theorem~\ref{thm:satellite} basically compares the growth of crossing numbers $c(k_n)$ and $c(K_n)$ under twistings.  

There are two key ingredients to the proofs of our Theorems~\ref{thm:stablecrossing} and \ref{thm:satellite}.
The first is the lower bound on crossing number of a knot $K$ in terms of its knot genus $g(K)$ and braid index $b(K)$, observed by Diao \cite{Diao} (see also Ito \cite{Ito}) and derived from Yamada's approach \cite{Y} to Alexander's braiding theorem:
\[ 2g(K)-1 + b(K) \leq c(K)\]
We present this as Claim~\ref{claim:genus_braid_index_crossing}.
The second is the lower bound on knot genus in a twist family given in \cite[Theorem 5.3]{BT}.  
For a twist family of knots $\{K_n\}$ with twisting circle $c$, then for sufficiently large $n$ we have
\[2g(K_n) = G + n\omega x\]
where $G$ is some constant, $\omega$ is the winding number of $K$ about $c$, and $x$ is the meridional norm.
In other words, 
\begin{corollary}[of {\cite[Theorem 5.3]{BT}}]
The {\em stable genus} of a twist family of knots $\{K_n\}$ is $g_s(K_n) = \omega x/2$. \qed
\end{corollary}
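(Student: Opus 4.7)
The plan is to deduce the corollary directly from the cited \cite[Theorem 5.3]{BT}. First I would introduce a definition of the \emph{stable genus} in direct analogy with Definition~\ref{defn:SCN} for the stable crossing number, setting
\[
\overline{g_s}(K_n) = \varlimsup_{n \to \infty} \frac{g(K_n)}{n}, \qquad \underline{g_s}(K_n) = \varliminf_{n \to \infty} \frac{g(K_n)}{n},
\]
and declaring $g_s(K_n)$ to be their common value whenever these two agree.

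Next I would invoke \cite[Theorem 5.3]{BT} as quoted above: there is a constant $G$ (depending only on the twist family) and an integer $N$ such that for every $n \geq N$ the exact equality $2g(K_n) = G + n\omega x$ holds. Dividing by $2n$ gives
\[
\frac{g(K_n)}{n} = \frac{G}{2n} + \frac{\omega x}{2},
\]
and as $n \to \infty$ the first summand vanishes while the second is constant. Therefore $\overline{g_s}(K_n) = \underline{g_s}(K_n) = \omega x/2$, so the stable genus exists and equals $\omega x/2$, as claimed.

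There is essentially no obstacle in this argument: the cited theorem supplies an exact linear equality (not merely an asymptotic bound) that holds for all sufficiently large $n$, and the corollary is then just an arithmetic reformulation. The only points requiring attention are placing the definition of stable genus before the statement, and remarking that the ``sufficiently large $n$'' hypothesis is harmless because the limit depends only on the tail of the sequence.
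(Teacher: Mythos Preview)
Your proposal is correct and matches the paper's approach exactly: the paper states the equality $2g(K_n) = G + n\omega x$ for large $n$ from \cite[Theorem 5.3]{BT} and then presents the corollary with a \qed, treating the passage to the limit as immediate. Your argument spells out precisely that division by $2n$ and limit.
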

Together these results establish the lower bound 
\begin{equation}
\label{eqn:lowerbound}
\omega x + \underline{b_s}(K_n) \leq \underline{c_s}(K_n).
\end{equation}
When $\omega=0$, any utility in this bound relies on information about the braid index. (Note that $x=0$ implies $\eta=1$.) 
Adapting Yamada's braiding algorithm \cite{Y}, we obtain

\begin{thm:stablebraid}
A twist family of knots $\{K_n\}$ with wrapping number $\eta$ and winding number $\omega$ satisfies 
\[ 0 \leq \underline{b_s}(K_n) \leq \overline{b_s}(K_n) \leq (\eta-\omega)/2.\]
\end{thm:stablebraid}

Unfortunately the general lower bound must be $0$ to accommodate coherent twist families; 
indeed Theorem~\ref{thm:stablebraid} shows that the stable braid index of a coherent twist family is $0$.  
Nevertheless we propose the following conjecture and its weaker version.  (Cf. \cite[Conjecture 3.12]{Nutt})
\begin{StabBIC}
The {\em stable braid index} of a twist family of knots $\{K_n\}$ with wrapping number $\eta$ and winding number $\omega$ is $b_s(K_n) = (\eta-\omega)/2$.
\end{StabBIC}

\begin{WeakStabBIC}
If a twist family of knots $\{K_n\}$ is non-coherent, then $\underline{b_s}(K_n) > 0$.
\end{WeakStabBIC}

\begin{remark}\phantom{xxxxxx}
\begin{enumerate}
\item Observe that if the Weak Stable Braid Index Conjecture were true, then the lower bound of Equation~(\ref{eqn:lowerbound}) would be positive when $\eta>1$.   
Hence Theorem~\ref{thm:hightwistedcrossing} 
would show the Satellite Crossing Number Conjecture is true for any satellite with $\eta >1$ and a sufficiently twisted pattern. 
\item Furthermore, Ohyama's bound $2b(K)-2 \leq c(K)$ \cite{Ohyama} gives the alternative lower bound 
\begin{equation}
\label{eqn:ohyamalowerbound}
2\underline{b_s}(K_n) \leq \underline{c_s}(K_n)
\end{equation}
 that would be sharper that Equation~(\ref{eqn:lowerbound}) whenever $\underline{b_s}(K_n)>\omega x$.
\end{enumerate}
\end{remark}

\medskip

As initial evidence towards the Stable Braid Index Conjecture, in Section~\ref{sec:wrap2wind0} we prove it using the Morton-Franks-Williams inequality \cite{Morton,FW} in the case of twist families of knots with wrapping number $2$ and winding number $0$ subject to mild constraints that preclude certain cancellations. 
A result of Ohyama \cite{Ohyama} then enables the confirmation of the Stable Crossing Number Conjecture for these twist families as well.

Finally in Section~\ref{sec:examples} we give some examples and questions.

\section{Meridional norm for the pair of a knot and a disjoint unknot}

For a pair of a knot $K$ and a disjoint unknot $c$, 
the wrapping number $\eta$ and the winding number $\omega$ are perhaps the most elementary invariants. 
Between them we introduce meridional norm as follows. 

\begin{definition}[\textbf{meridional norm}]
\label{defn:norm}
Let $K$ be a knot in $S^3$ and $c$ a disjoint unknot bounding a disk $\Delta$ with wrapping number $\eta$ and winding number $\omega$. 
Then the {\em meridional norm} of $K$ with respect to $c$, denoted by $x$, 
is the Thurston norm $x([D])$ of the homology class of the punctured disk $D = \Delta \cap (S^3 - \mathrm{int}N(K \cup c))$. 
\end{definition}

\begin{lemma}
\label{lem:meridionalnorm}
If $\omega = \eta$, then $x = \eta-1$.  Otherwise $\omega < \eta$ and $\omega+1 \le x \le \eta -1$. 
\end{lemma}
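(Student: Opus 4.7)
The plan is to bound $x$ above by using $D$ itself and to bound $x$ below via a parity constraint combined with a capping obstruction on Euler characteristics.

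Since $\Delta$ realizes the wrapping number, $D$ is a connected planar surface with $1 + \eta$ boundary components, so $-\chi(D) = \eta - 1$ and $D$ itself realizes $x \leq \eta - 1$. For the parity, set $M = S^3 \setminus \mathrm{int}\, N(K \cup c)$ and observe that any two representatives of $[D] \in H_2(M, \partial M)$ differ by a closed orientable surface (using $H_2(M) = 0$ for a link exterior), whose Euler characteristic is always even. Hence $x \equiv -\chi(D) \equiv \eta - 1 \pmod 2$. Since the $\eta$ intersection points of $\Delta$ with $K$ carry signs summing to $\omega$, we also have $\omega \equiv \eta \pmod 2$, and therefore $x \not\equiv \omega \pmod 2$.

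For the lower bound, I would work with a Thurston-norm minimizing representative $S$ of $[D]$. Standard reductions --- removing inessential boundary components, cancelling parallel boundary curves of opposite orientation by annuli on $\partial M$, and discarding closed components via $H_2(M) = 0$ --- let me assume $S$ is connected and that $\partial S$ consists of exactly one longitude $\lambda_c$ on $\partial N(c)$ together with $\omega$ parallel meridians of $K$ on $\partial N(K)$, giving $1 + \omega$ boundary components in total. Then $-\chi(S) = 2g(S) + \omega - 1 \geq \omega - 1$, so $x \geq \omega - 1$.

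The decisive step is to rule out $x = \omega - 1$ when $\omega < \eta$. If $S$ were a planar minimizer, so $\chi(S) = 1 - \omega$, I would cap off its $\omega$ meridional boundary circles with disjoint meridian disks of $K$ inside $N(K)$, producing a connected surface $\hat S$ in the solid torus $V := S^3 \setminus \mathrm{int}\, N(c)$ with $\chi(\hat S) = 1$ and $\partial \hat S = \lambda_c$, the meridian of $V$. Hence $\hat S$ is an embedded meridian disk of $V$; since each capping disk meets $K$ in a single point, $|\hat S \cap K| = \omega$. By definition of the wrapping number, $\eta \leq \omega$, and combined with $\omega \leq \eta$ this forces $\omega = \eta$. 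Contrapositively, if $\omega < \eta$ then $x \neq \omega - 1$; together with $x \geq \omega - 1$ and the parity $x \not\equiv \omega \pmod 2$, we conclude $x \geq \omega + 1$. When $\omega = \eta$, the lower bound $x \geq \omega - 1 = \eta - 1$ already matches the upper bound $x \leq \eta - 1$, yielding $x = \eta - 1$.

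The main obstacle will be rigorously justifying the reduction to a connected minimizer with the stated boundary structure; this is standard Thurston-norm theory but requires careful handling of boundary-compressions, the fact that disjoint essential simple closed curves on a torus are mutually parallel, and the long exact sequence of $(M, \partial M)$ which forces any closed component of $S$ to be null-homologous (and hence discardable without increasing the norm).
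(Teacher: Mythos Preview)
Your approach matches the paper's: bound $x \le \eta-1$ via $D$, reduce a norm-minimizer to a connected surface $S$ with coherent boundary (one longitude of $c$ and $\omega$ meridians of $K$), read off $x = 2g(S)+\omega-1$, and rule out $g(S)=0$ when $\omega<\eta$ by capping the meridional boundaries to produce a meridian disk of the solid torus $V$ meeting $K$ in $\omega$ points.

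Two technical corrections are needed. First, $H_2(M)=0$ is \emph{false} for a two-component link exterior: Alexander duality gives $H_2(M)\cong\mathbb Z$, generated by either boundary torus. What you actually need---and what does hold---is that the map $H_2(M)\to H_2(M,\partial M)$ vanishes (Mayer--Vietoris shows $H_2(\partial M)$ surjects onto $H_2(M)$), so closed components of $S$ are null-homologous \emph{rel boundary} and may be discarded. Second, your early parity argument is invalid as stated: two representatives of $[D]\in H_2(M,\partial M)$ need not differ by a closed surface, since their boundaries are merely homologous in $\partial M$, not equal as curves. Happily this argument is redundant: your own formula $x = 2g(S)+\omega-1$ already forces $x\equiv\omega-1\pmod 2$, so once $g(S)=0$ is excluded you obtain $g(S)\ge 1$ and hence $x\ge\omega+1$ directly, without a separate parity step.
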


\begin{proof}
Let $D$ be a properly embedded punctured disk in the link exterior $E = S^3 - \mathrm{int}N(K \cup c)$ as in the definition of meridional norm so that $x=x([D])$.
Since the wrapping number $\eta$ is due to a disk bounded by $c$ that $K$ intersect $\eta$ times, we know
$x([D]) \le \eta-1$.   However, this punctured disk might not be norm minimizing.
Let $S$ be a properly embedded surface  in $E$ 
with coherently oriented boundary that both represents 
$[D]$ and realizes $x([D])$. 
Since $[S] = [D] \in H_2(E, \bdry E)$, 
$\bdry [S] = \bdry [D] \in H_1(\partial E)$ and 
$[\bdry S] = [\bdry D] = \omega \mu_K + \lambda_c$, 
where $\mu_K$ denotes a meridian of $K$ and $\lambda_c$ is a preferred longitude of $c$. 
The coherence of $\bdry S$ and that it is homologous to $\bdry D$ means that $\bdry S$ consists of $\omega$ curves in $\bdry N(K)$ representing 
meridian of $K$ and a single curve in $\bdry N(c)$ representing the preferred longitude of $c$. 
Since $S$ realizes $x([D])$ we may assume $S$ has no closed components (by discarding any sphere or torus components).
Then, if $S$ were not connected, the boundary of some component would be a coherently oriented non-empty collection of meridians of $K$; 
yet that component would cap off to a closed non-separating surface in $N(K)$. 
Hence we may take $S$ to be connected.  
Then one observes that
\[
x([D]) = - \chi (S) = 2g(S) - 2+ (\omega+1) = 2g(S) + \omega -1.
\]
Since $x([D]) \le \eta -1$, if $\eta = \omega$, 
then $g(S) = 0$. 
Conversely, 
if $g(S)=0$, then $S$ is a disk with $\omega$  coherently oriented punctures. 
Hence $\eta=\omega$.
Therefore, if $\omega < \eta$, then $g(S)\geq 1$ so that $x([D]) \geq \omega+1$.
Hence either $\omega=\eta$ and $\chi([D]) = \eta-1$ or $\omega < \eta$ and $\omega+1 \leq x([D]) \leq \eta-1$.
Indeed, if $\eta=\omega+2$, then $\chi([D])=\eta-1$.   
Note also that $x([D]) + 1 = 2g(S) + \omega \equiv \omega \equiv \eta  \pmod{2}$. 
\end{proof}

\begin{remark}
Assume that $\omega > 1$. 
Then Lemma~\ref{lem:meridionalnorm} shows that 
\[
1 < \frac{\omega +1}{\omega -1} \le \frac{x}{\omega -1} \le \frac{\eta -1}{\omega -1}.
\]
Hence, the ratio $\frac{x}{\omega-1}$ is $1$ exactly when $\eta=\omega$ and is at least $1+\frac{2}{\omega-1}$ when $\eta>\omega$.  
As we discuss in the paragraph preceding  \cite[Theorem 4.2]{BM}, this ratio can be any rational number $r \geq 1$.  
When $\omega=1$ and $\eta>1$, $x$ can be any positive even integer.  
When $\omega=0$, $x$ can be any positive odd integer.  
See also \cite[Theorem 1.1]{BM}.
\end{remark}

\begin{example}
\label{ex:meridional_norm}
Let $T$ be any $(3,3)$-tangle in $D^2 \times I$ such that any properly embedded disk $\Delta$ with $\bdry \Delta$ essential in the annulus $\bdry D^2 \times I$ intersects the tangle more than once.  
Let $K_T$ be a link obtained by using this tangle for $T$ in the left and $K'_T$ a link in the right of Figure~\ref{fig:norm_tangle}, and let $x$ and $x'$ be their meridional norms with respect to $c$.
Following Lemma~\ref{lem:meridionalnorm} we have $x=x'=2$. 
Concretely, in the left 
a disk punctured thrice by $K_T$ has coherent boundary orientation on $\bdry \nbhd(K_T)$ and realizes $x=2$, 
while in the right a similar disk punctured thrice by $K_T'$ has incoherent boundary and realizes $x'=2$.  
In this latter case we may also tube together two oppositely oriented boundary components of the punctured disk to form a  once-punctured torus punctured once $K_T'$ that is homologous to the disk punctured thrice and also realizes $x'=2$.

However, letting $K_{T,2}$ and $K'_{T,2}$ be $2$-strand cables of $K_T$ and $K'_T$, respectively, 
we have meridional norms  $x_2=5$ for $K_{T,2}$ but just $x'_2=3$ for $K'_{T,2}$. 
Indeed, for $K_{T,2}$, $c$ bounds a disk punctured six times by $K_{T,2}$ coherently which realizes $x_2=5$. 
On the other hand, for $K'_{T,2}$, the above once punctured torus for $K'_T$ is now punctured twice coherently by $K'_{T,2}$ and realizes $x_2'=3$.
\end{example}

\begin{figure}[!ht]
\includegraphics[width=0.4\linewidth]{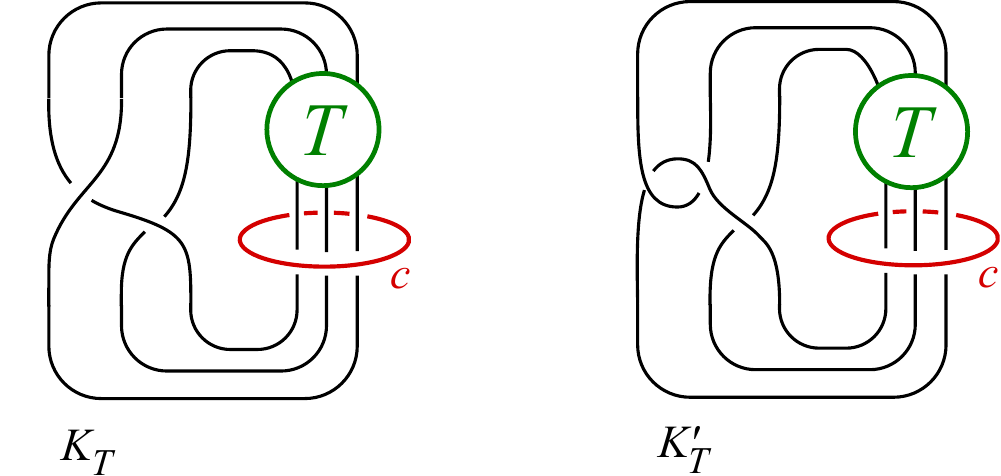}
\caption{
$K_T \cup c$ has $(\eta, \omega) = (3, 3)$, and $K'_T \cup c$ has $(\eta, \omega) = (3, 1)$, respectively, and 
both $K_T \cup c$ and $K'_T \cup c$ have the meridional norm $3$ unless $c$ cobounds an annulus with a meridian of a strand of $T$.
}
\label{fig:norm_tangle}
\end{figure}

\bigskip

\section{Proofs of Theorems~\ref{thm:stablecrossing} and \ref{thm:satellite}}
\label{proofs}

We first recall the following result of Diao \cite{Diao}, also demonstrated in \cite[Theorem 1.3]{Ito}, 
which essentially depends upon Yamada's refinement of Alexander's braiding theorem 
\cite[Theorem~3]{Y}. 

\begin{claim}
\label{claim:genus_braid_index_crossing}
For a knot $k$ in $S^3$, 
the following inequality holds. 
\[
2g(k) -1 + b(k) \le c(k).
\]
\end{claim}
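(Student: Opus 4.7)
The plan is to extract two complementary inequalities from a single minimal-crossing diagram and combine them through Yamada's braiding algorithm.

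First, I take a diagram $D$ of $k$ that realizes the crossing number, so $c(D) = c(k)$. Applying Seifert's algorithm to $D$ produces $s(D)$ Seifert circles and a Seifert surface $\Sigma$ of $k$ built from $s(D)$ disks joined by $c(D)$ twisted bands. Hence $\chi(\Sigma) = s(D) - c(D)$; since $\partial \Sigma$ has one component (as $k$ is a knot), $\chi(\Sigma) = 1 - 2 g_S(D)$, where $g_S(D)$ is the genus of $\Sigma$. This gives $2g_S(D) = c(D) - s(D) + 1$. Because $g(k) \le g_S(D)$ for any Seifert surface, I obtain the first ingredient:
\[ 2g(k) + s(D) \le c(k) + 1. \]

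Second, I invoke Yamada's theorem from \cite{Y}, which transforms any diagram of a knot with $s$ Seifert circles into a closed braid presentation on exactly $s$ strands. Applied to $D$, this yields $b(k) \le s(D)$. Combining the two inequalities to eliminate $s(D)$ gives
\[ 2g(k) + b(k) \le 2g(k) + s(D) \le c(k) + 1, \]
which rearranges to the desired bound $2g(k) - 1 + b(k) \le c(k)$.

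The only nontrivial input is Yamada's theorem that a diagram can be braided without increasing the Seifert-circle count; everything else is an elementary Euler-characteristic count for the surface produced by Seifert's algorithm on a minimal-crossing diagram. Since Yamada's theorem is already cited and attributed in the surrounding text, the main obstacle is really just packaging these two standard estimates so that they are simultaneously read off from the same diagram $D$ realizing $c(k)$.
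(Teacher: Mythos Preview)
Your proof is correct and follows essentially the same approach as the paper's: both take a minimal-crossing diagram, apply Seifert's algorithm to bound $2g(k)$ via $\chi(\Sigma)=s(D)-c(D)$, invoke Yamada's theorem to get $b(k)\le s(D)$, and combine. The only cosmetic difference is that you route the Euler characteristic through the Seifert genus $g_S(D)$ explicitly, whereas the paper writes the chain of inequalities $1-2g(k)\ge \chi(S_D)=s(D)-c(D)\ge b(k)-c(k)$ in one line.
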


\begin{proof}
Let us take a minimum crossing diagram $D$ of $k$, for which $c(D) = c(k)$. 
Applying Seifert's algorithm, we obtain a Seifert surface $S_D$ of $k$ which satisfies 
$\chi(S_D) = s(D) - c(D)$ where $s(D)$ denotes the number of Seifert circles.  
Note that $\chi(S_D) \le 1-2g(k)$. 
Now let $s(k)$ be the minimum number of Seifert circles among all diagrams of $k$. 
Then by \cite[Theorem~3]{Y}  we have $b(k) = s(k) \le s(D)$. 
Hence we get 
\[
1-2g(k) \ge \chi(S_D) = s(D) - c(D) \ge b(k) -c(k)
\]
from which the desired inequality follows.
\end{proof}

\subsection{Bounds on stable crossing numbers in twist families}

\begin{theorem}
\label{thm:stablecrossing}
Let $\{ K_n \}$ be a twist family of knots with winding number $\omega$, meridional norm $x$, and wrapping number $\eta$.  
Then we have the following.  
\begin{enumerate}
\item If $\omega = \eta$, then $c_s(K_n) = \eta(\eta-1)$.
\item If $\omega < \eta$, then 
$\omega(\omega+1) 
\le 
\omega x 
\le 
\underline{c_s}(K_n)
\le \overline{c_s}(K_n)
\le \eta(\eta-1)$.
\end{enumerate}
\end{theorem}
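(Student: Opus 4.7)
The plan is to establish matching upper and lower bounds on $c(K_n)/n$ and then specialize using Lemma~\ref{lem:meridionalnorm}. The upper bound $\overline{c_s}(K_n) \le \eta(\eta-1)$ comes from the diagrammatic observation already made in the paragraph preceding Definition~\ref{defn:SCN}: I would fix a diagram $D$ of $K_0$ exhibiting the twist family structure with $\eta$ parallel strands through the disk bounded by $c$, and apply $n$ full twists to these strands to produce a diagram $D_n$ of $K_n$ with $c(D_n) = c(D) + n\eta(\eta - 1)$. Hence $c(K_n) \le c(D) + n\eta(\eta - 1)$, and dividing by $n$ and letting $n \to \infty$ yields the desired upper bound, which is independent of whether $\omega = \eta$ or $\omega < \eta$.

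For the lower bound I would combine Claim~\ref{claim:genus_braid_index_crossing} with the cited stable genus corollary of \cite[Theorem 5.3]{BT}. From Claim~\ref{claim:genus_braid_index_crossing},
\[
c(K_n) \ge 2g(K_n) - 1 + b(K_n) \ge 2g(K_n) - 1.
\]
By the stable genus result, $2g(K_n) = G + n\omega x$ for some constant $G$ and all sufficiently large $n$, which gives $c(K_n) \ge G - 1 + n\omega x$. Dividing by $n$ and taking the liminf yields $\omega x \le \underline{c_s}(K_n)$, which is precisely the content of (\ref{eqn:lowerbound}) together with $\underline{b_s}(K_n) \ge 0$.

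To finish, I would split into cases. For (1), when $\omega = \eta$, Lemma~\ref{lem:meridionalnorm} gives $x = \eta - 1$, so $\omega x = \eta(\eta - 1)$, and the upper and lower bounds pinch together to give $c_s(K_n) = \eta(\eta - 1)$. For (2), when $\omega < \eta$, Lemma~\ref{lem:meridionalnorm} gives $\omega + 1 \le x$, so $\omega(\omega + 1) \le \omega x$, completing the desired chain of inequalities.

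The main obstacle is mild here: essentially all heavy lifting is done by the cited ingredients---the Yamada--Diao inequality of Claim~\ref{claim:genus_braid_index_crossing} and the stable genus formula of \cite{BT}---so what remains is careful bookkeeping of the $n\eta(\eta - 1)$ crossings created in the twist region and the parameter comparisons provided by Lemma~\ref{lem:meridionalnorm}. The same two inequalities also furnish the explicit constants $C$ and $C'$ claimed in Corollary~\ref{cor:boundsoncrossing}: $C$ arises as $c(D)$ in the upper bound and $C'$ as $G - 1$ (adjusted if necessary to cover small $n$) in the lower bound, so that corollary will follow by the same argument without requiring any asymptotic passage.
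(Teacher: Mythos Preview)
Your proposal is correct and follows essentially the same route as the paper: the diagrammatic upper bound $c(K_n)\le c(D)+n\eta(\eta-1)$, the lower bound $c(K_n)\ge 2g(K_n)-1$ via Claim~\ref{claim:genus_braid_index_crossing} combined with the stable genus formula $2g(K_n)=G+n\omega x$ from \cite{BT}, and then the case split using Lemma~\ref{lem:meridionalnorm}. The only extra content in the paper's proof is a short remark that the same bounds hold for negative twisting by passing to the mirror link $\overline{K}\cup\overline{c}$, which is not required for the theorem as stated.
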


\begin{proof}
It follows from \cite{BT} that for sufficiently large $n$, 
\begin{equation}
\label{eqn:genus_norm}
2g(K_n) = G + n \omega x, 
\end{equation}
where $G$ is a constant depending only on the initial link $K \cup c$.  
With Claim~\ref{claim:genus_braid_index_crossing}, Equation (\ref{eqn:genus_norm}) shows that for sufficiently large $n$ we have
\begin{equation}
\label{eqn:norm_crossing}
G + n \omega x -1 + b(K_n)  = 2g(K_n) -1 + b(K_n) \le c(K_n).
\end{equation}
This implies that 
\begin{equation}
\label{eqn:wx_leq_scn}
\omega x \leq \omega x + \underline{b_s}(K_n) \leq  \varliminf_{n \to \infty}\frac{G + n \omega x - 1+ b(K_n)}{n} 
\le 
 \varliminf_{n \to \infty} \frac{c(K_n)}{n} = \underline{c_s}(K_n)
\end{equation}
where $\underline{b_s}(K_n) = \varliminf_{n\to \infty} b(K_n)/n  \geq 0$ since $b(K_n) >0$ for all $n$. 
(Corollary~\ref{cor:coherentbraid} shows that coherent twist families have $b_s(K_n)=0$.)
On the other hand, as in the discussion preceding Definition~\ref{defn:SCN},
we have a diagram $D_n$ of $K_n$ obtained from a `nice' diagram $D_0$ of $K=K_0$ which satisfies
\begin{equation}
\label{eqn:crossing_diagram}
c(K_n) \le c(D_n) = C + n\eta(\eta-1). 
\end{equation}
where the constant $C = c(D_0)$ is independent of $n$. 
Then 
\begin{equation}
\label{eqn:scn_leq_eta(eta-1)}
\overline{c_s}(K_n) = \varlimsup_{n \to \infty}\frac{c(K_n)}{n} \le 
\lim_{n \to \infty}\frac{c(D_n)}{n} 
= 
\lim_{n \to \infty}\frac{C + n\eta(\eta-1)}{n} 
= \eta(\eta-1).
\end{equation}

Together, Equations~(\ref{eqn:wx_leq_scn}) and (\ref{eqn:scn_leq_eta(eta-1)}) yield
\begin{equation}
\label{eqn:crossing_norm}
\omega x 
\le \omega x + \underline{b_s}(K_n) 
\le \underline{c_s}(K_n) 
\le \overline{c_s}(K_n)
\le \eta(\eta-1).
\end{equation}
Utilizing Lemma~\ref{lem:meridionalnorm}, we then obtain
the following from Equation~\ref{eqn:crossing_norm}:
\begin{itemize}
\item  If $\omega = \eta$ then
\[
\eta(\eta-1) 
= \omega x 
\le \underline{c_s}(K_n)
\le \overline{c_s}(K_n)
\le \eta(\eta-1)
\]
so that in fact $c_s(K) = \eta(\eta-1)$.
\item Otherwise $\omega < \eta$ and 
\[
\omega(\omega+1) 
\le \omega(\omega+1) +\underline{b_s}(K_n) 
\le \omega x +\underline{b_s}(K_n) 
\le \underline{c_s}(K_n)
\le \overline{c_s}(K_n)
\le \eta(\eta-1).
\]
\end{itemize}

\medskip

Now let $\overline{K \cup c} = \overline{K} \cup \overline{c}$ be the mirror image of $K \cup c$ and 
$\overline{K}_n$ be the knot obtained from $\overline{K}$ by $n$-twists along $\overline{c}$. 
The wrapping and winding numbers of $\overline{K}$ about $\overline{c}$ are also $\eta$ 
and  $\omega$. 
Since the mirror image  of $K_{-n}$ is $\overline{K_{-n}}$ which is $\overline{K}_{n}$ and the crossing number is preserved under mirroring, we have $c(K_{-n}) = c(\overline{K_{-n}}) = c(\overline{K}_{n})$.
Therefore, considering the stability under negative twisting, 
we again have $c_s(K) = \eta(\eta-1)$  if  $\omega=\eta$ and otherwise
\[
\omega(\omega+1) 
\le \omega x
\le \underline{c_s}(K_{-n})
\le \overline{c_s}(K_{-n})
\le \eta(\eta-1).
\]
\end{proof}

\begin{corollary}
\label{cor:boundsoncrossing}
Let $\{ K_n \}$ be a twist family of knots with winding number $\omega$ and wrapping number $\eta$. 
\begin{enumerate}
\item If $\omega=\eta$, then there exist constants $C,\ C'$ such that
\[
C' + n\eta(\eta-1) \le c(K_n) \le C +  n\eta(\eta-1)
\]
for all integers $n \geq 0$.
\item If $\omega<\eta$, then there exist constants $C,\ C'$ such that  
\[
C' + n\omega(\omega + 1) \le c(K_n) \le C +  n\eta(\eta-1)
\]
for all integers $n \geq 0$.
\end{enumerate}
\end{corollary}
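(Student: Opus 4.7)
The approach is to extract both bounds directly from the estimates established in the proof of Theorem~\ref{thm:stablecrossing}, and then address the finitely many small values of $n$ where the asymptotic lower bound may not yet apply by shrinking the constant $C'$.

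The upper bound is immediate in both cases: Equation~(\ref{eqn:crossing_diagram}) gives $c(K_n) \le c(D_n) = c(D_0) + n\eta(\eta-1)$ for all $n \ge 0$ from the standard diagrammatic construction, so we may take $C = c(D_0)$.

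For the lower bound, the key input is Equation~(\ref{eqn:norm_crossing}): there is a threshold $N_0$ such that for all $n \ge N_0$,
\[
G + n\omega x - 1 + b(K_n) \le c(K_n).
\]
Since $b(K_n) \ge 1$, this implies $G + n\omega x \le c(K_n)$ for all $n \ge N_0$. In case (1), Lemma~\ref{lem:meridionalnorm} gives $\omega x = \eta(\eta-1)$, yielding $G + n\eta(\eta-1) \le c(K_n)$ for $n \ge N_0$. In case (2), the same lemma yields $x \ge \omega+1$, hence $\omega x \ge \omega(\omega+1)$, so $G + n\omega(\omega+1) \le c(K_n)$ for $n \ge N_0$. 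Writing $\varphi = \eta(\eta-1)$ in case (1) and $\varphi = \omega(\omega+1)$ in case (2), I would then set
\[
C' = \min\Bigl(G,\ \min_{0 \le n < N_0}\bigl(c(K_n) - n\varphi\bigr)\Bigr),
\]
which is a well-defined integer since the inner minimum ranges over finitely many values. With this choice, $C' + n\varphi \le c(K_n)$ holds for all $0 \le n < N_0$ by construction of the inner minimum, and for all $n \ge N_0$ because $C' \le G$.

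There is no substantive obstacle here; the corollary is essentially a bookkeeping consequence of the proof of Theorem~\ref{thm:stablecrossing}. The only care required is to choose a single constant $C'$ that accommodates both the asymptotic regime (governed by \cite{BT}) and the finite transient regime where the genus formula has not yet stabilized, which is exactly achieved by taking the minimum displayed above.
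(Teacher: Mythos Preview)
Your proof is correct and follows essentially the same approach as the paper: both extract the upper and lower bounds from Equations~(\ref{eqn:crossing_diagram}) and (\ref{eqn:norm_crossing}) in the proof of Theorem~\ref{thm:stablecrossing} and then invoke Lemma~\ref{lem:meridionalnorm} to split into the two cases. Your treatment is in fact slightly more careful than the paper's, since you explicitly shrink $C'$ to accommodate the finitely many $n < N_0$ where the genus formula from \cite{BT} has not yet stabilized, whereas the paper leaves this adjustment implicit.
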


\begin{proof}
Combining Equations
(\ref{eqn:norm_crossing}) and (\ref{eqn:crossing_diagram}) from the proof of Theorem~\ref{thm:stablecrossing}, we have 
\[
C' + n\omega x\le c(K_n) \le C +  n\eta(\eta-1)
\]
for all integers $n \geq 0$.
Then apply Lemma~\ref{lem:meridionalnorm} for the two cases of $\eta=\omega$ and $\eta>\omega$.
\end{proof}

\medskip

\begin{example}
\label{ex:Thurston_norm}
When the twist family is not coherent, using the meridional norm $x$ instead of the lower bound $\omega+1$ from Lemma~\ref{lem:meridionalnorm} can give improved estimates.

\begin{figure}[h]
	\begin{center}
		\includegraphics[width=0.55\textwidth]{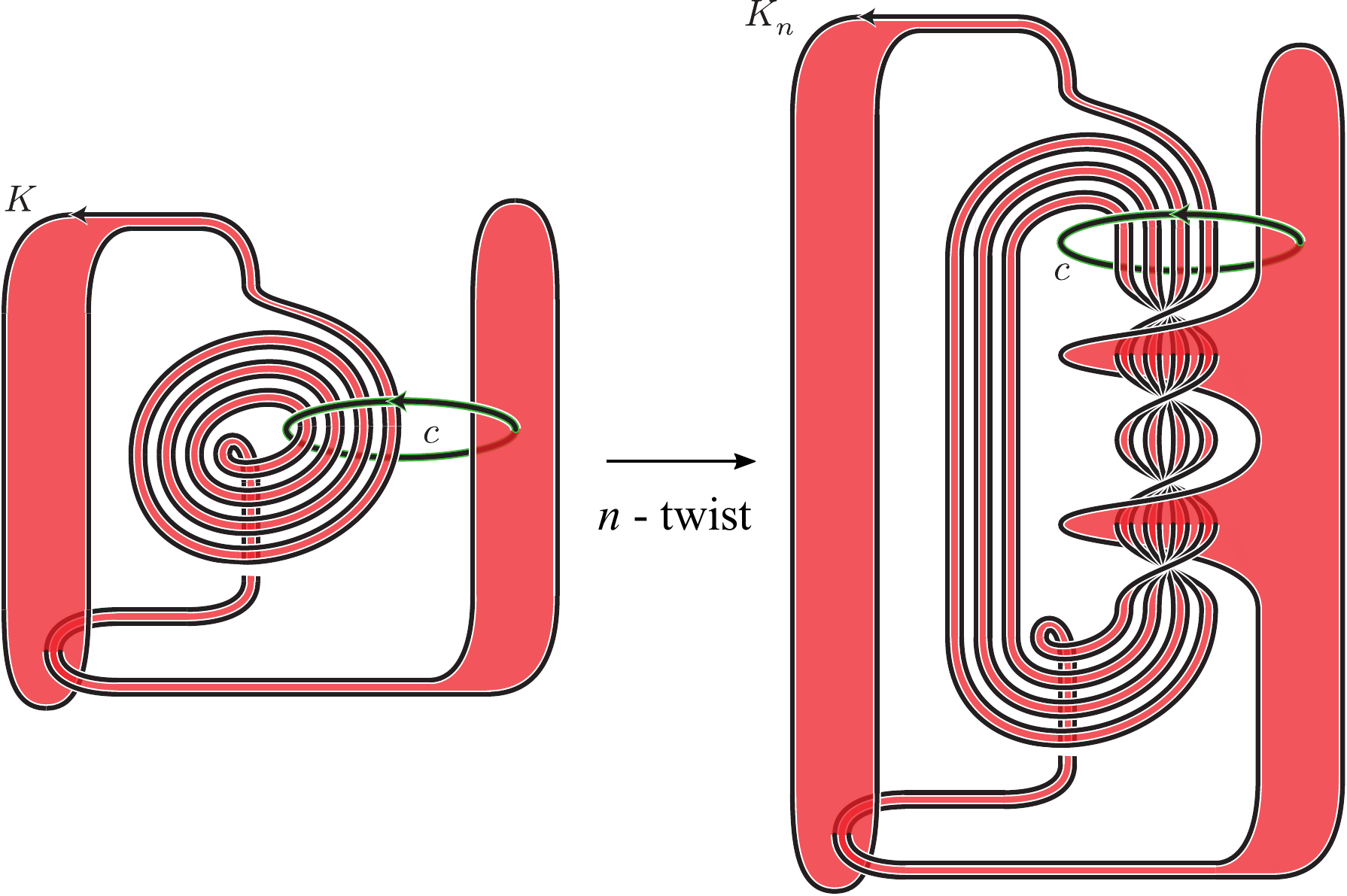}
		\caption{}
		\label{fig:wrappingribbons}
	\end{center}
\end{figure}

Take a link $K \cup c$ as in Figure~\ref{fig:wrappingribbons} in which $\eta = 9,\ \omega = 1$. 
As shown in \cite{BM}, 
$g(K_n) = 4n+1 = \dfrac{C + n x}{2}$ for sufficiently large $n$, 
and thus $\displaystyle \lim_{n \to \infty}\frac{g(K_n)}{n} = 4 = \dfrac{x}{2}$. 
Hence $x = 8$ and Theorem~\ref{thm:stablecrossing} shows
\[ 8 \le \underline{c_s}(K_n) \le \overline{c_s}(K_n) \le 72. \]
Note that since $\omega = 1< \eta$,  the lower bound using only the winding number is just $\omega(\omega+1) = 2$. 
\end{example}

\subsection{Crossing numbers of highly twisted satellites}
As in the introduction, let $K$ be a satellite of a knot $k$ in $S^3$ so that $K$ is contained in the interior of a solid torus neighborhood $V$ of $k$. 
Let $c$ be an unknot disjoint from $V$ so that twisting $K$ and $k$ simultaneously about $c$ yields the twist families $\{K_n\}$ and $\{k_n\}$ of a satellite knot $K_n$ with companion $k_n$ for each integer $n$. 

Let $\eta_K$ and $\omega_K$ be the wrapping and winding numbers of $K$ about $c$, and
let $\eta_k$ and $\omega_k$ be the wrapping and winding numbers of $k$ about $c$. 
Viewing $K \subset V$ as a {\em pattern} $P$ for the satellite operation, let $\eta_P$ and $\omega_P$ be the wrapping and winding numbers of $K$ about the meridian of $V$. We further assume $\eta_{P} \ge 2$.

\begin{claim}
\label{claim:wrap_satellite}
The winding number of $K$ about $c$ is $\omega_k\omega_P$, 
and the wrapping number of $K$ about $c$ is $\eta_k \eta_P$.
\end{claim}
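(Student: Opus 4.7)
The plan is to prove the winding identity $\omega_K = \omega_k \omega_P$ by linearity of linking numbers, and the wrapping identity $\eta_K = \eta_k \eta_P$ by matching geometric upper and lower bounds. The winding identity is essentially immediate: since $K$ lies in the solid torus $V$ with core $k$, in $H_1(V) \cong \mathbb{Z}\langle[k]\rangle$ we have $[K] = \omega_P [k]$. Because $c$ lies in the complement of $V$, the linking number with $c$ factors through $H_1(V)$, giving
$\omega_K = \mathrm{lk}(K,c) = \omega_P \cdot \mathrm{lk}(k,c) = \omega_k \omega_P.$

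For the wrapping upper bound $\eta_K \le \eta_k \eta_P$, I would choose a disk $\Delta_k$ for $c$ realizing $|k \cap \Delta_k| = \eta_k$ and arrange by a small isotopy near each intersection point that $\Delta_k \cap V$ consists of $\eta_k$ meridian disks of $V$. Picking a meridian disk $D^*$ of $V$ with $|K \cap D^*| = \eta_P$, I then surger $\Delta_k$ by replacing each of its $\eta_k$ meridian disks with a parallel copy of $D^*$ (permissible since all meridian disks of $V$ are isotopic rel boundary after a sliding of the boundary meridian on $\partial V$). The resulting disk $\Delta$ for $c$ has $|K \cap \Delta| = \eta_k \eta_P$. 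For the lower bound, I take any disk $\Delta$ for $c$ with $|K \cap \Delta| = \eta_K$ and normalize by isotopy in $S^3 \setminus K$ so that $\Delta$ is transverse to $\partial V$ with $|\Delta \cap \partial V|$ minimal. Since $k$ is a nontrivial companion, $\partial V$ is incompressible in $S^3 \setminus V$, and standard innermost-disk arguments reduce $\Delta \cap V$ to a disjoint union of meridian disks $D_1, \dots, D_m$. As $k$ is the core of $V$, we have $|k \cap \Delta| = m$, so the definition of $\eta_k$ forces $m \ge \eta_k$. Each $D_i$ being a meridian disk satisfies $|K \cap D_i| \ge \eta_P$, whence $\eta_K = \sum_i |K \cap D_i| \ge m \eta_P \ge \eta_k \eta_P$.

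The main technical obstacle will be the normalization step in the lower bound, where one must reduce $\Delta \cap V$ to a disjoint union of meridian disks without increasing $|\Delta \cap K|$. Components of $\Delta \cap V$ may initially be compressible planar surfaces whose compressing disks in $V$ meet $K$, so a naive compression in $V$ could drive up the intersection count with $K$. The resolution is to combine innermost-disk and cut-and-paste arguments performed in $V \setminus K$ and $S^3 \setminus (V \cup K)$ simultaneously, exploiting the existence of meridian disks of $V$ realizing $\eta_P$ to ensure each simplification preserves or reduces the count with $K$. This is standard in the satellite-knot literature but requires careful bookkeeping.
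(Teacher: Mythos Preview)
Your treatment of the winding number and of the upper bound $\eta_K \le \eta_k\eta_P$ matches the paper's. The lower bound, however, has a genuine gap at precisely the spot you flag.

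After minimizing $|\Delta \cap \partial V|$ and discarding curves inessential in $T=\partial V$, what you actually know is that every curve of $\Delta \cap T$ is a meridian of $V$ and that each \emph{innermost} such curve in $\Delta$ bounds a meridian disk of $V$. It does \emph{not} yet follow that $\Delta \cap V$ consists solely of meridian disks: between nested meridional curves there may sit annular or more complicated planar pieces of $\Delta$, either inside or outside $V$. Your proposed remedy---compressing in $V\setminus K$, or swapping pieces for copies of a minimal meridian disk realizing $\eta_P$---does not obviously work. For example, a boundary-parallel annular component of $\Delta \cap V$ can be disjoint from $K$, so replacing it by two parallel copies of a minimal meridian disk would \emph{increase} $|\Delta \cap K|$ by $2\eta_P$; and there is no reason a non-disk component of $\Delta \cap V$ must admit a compressing disk in $V$ that misses $K$.

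The paper resolves this not by surgering $\Delta \cap V$ but by proving directly that in a minimizing $\Delta$ every curve of $\Delta \cap T$ is already innermost in $\Delta$. The key is to look at the \emph{outermost} piece: the component $P$ of $\Delta \setminus V$ containing $c=\partial\Delta$. Each curve of $\partial P \setminus c$ bounds a subdisk of $\Delta$ which, since it contains at least one innermost curve, meets $K$ at least $\eta_P$ times; hence $|\Delta\cap K|\ge(|\partial P|-1)\eta_P$. Now build a competitor disk $D'$ by capping each curve of $\partial P\setminus c$ with an annulus in $T$ glued to a parallel copy of a fixed minimal meridian disk $D_V$. Then $|D'\cap K|=(|\partial P|-1)\eta_P\le \eta_K$, so $D'$ also realizes $\eta_K$, while $|D'\cap T|=|\partial P|-1$. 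If some curve of $\Delta\cap T$ were not innermost one would have $|D'\cap T|<|\Delta\cap T|$, contradicting minimality. Thus all curves are innermost, $\Delta\cap V$ is a disjoint union of $m\ge\eta_k$ meridian disks, and your final counting goes through.
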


\begin{proof}
Homological computations show that the winding number of $K$ about $c$ is the product $\omega_k\omega_P$. 
Now we show that the wrapping number of $K$ about $c$ is $\eta_k\eta_P$. 

Let $\eta_K$ be the wrapping number of $K$ about $c$.  
Take an embedded disk $D$ with $\bdry D = c$ for which $|D \cap k| = \eta_k$, 
the wrapping number of $k$ about $c$. 
Let $V$ be a solid torus neighborhood of $k$ that intersects $D$ in $\eta_k$ meridional disks. 
Viewing $K$ as a satellite of $k$, we may regard $K$ as being contained in the interior of $V$. 
Since the wrapping number of $K$ in $V$ is $\eta_P$, there is a meridional disk $D_V$ of $V$ so that $|D_V \cap K| = \eta_P$.  
Take $\eta_k$ parallel copies of $D_V$ (each that $K$ intersects $\eta_P$ times) and replace each of the $\eta_k$ disks of $D \cap V$ with a copy of $D_V$ and an annulus to form a new disk $D'$ that $K$ now intersects $\eta_k\eta_P$ times.  
By an appropriate choice of these annuli and a small isotopy in a collar neighborhood of $\bdry V$, $D'$ may be taken to be an embedded disk with boundary $c$.  
Hence $\eta_K \leq \eta_k \eta_P$.

Now we must show that $\eta_K \geq \eta_k \eta_P$. 
As $K$ is a satellite of $k$, there is an embedded torus $T$ that bounds a solid torus $V$ which is disjoint from $c$, contains $K$, and has $k$ as a longitude.  
Furthermore $V$ has a meridional disk $D_V$ that $K$ intersects $\eta_P$ times.
Among embedded disks bounded by $c$ that $K$ intersects $\eta_K$ times, let $D$ be one for which $|D \cap T|$ is minimal.
Then $D \cap T$ is a collection of circles.  
Among such circles that bounds a disk in $T$, if there is one, take $\gamma$ to be an innermost one.  
Replace the disk $\gamma$ bounds in $D$ with the disk it bounds in $T$ to make a new disk $D'$.  
A slight isotopy of $D'$ in a collar of the disk $\gamma$ bounds in $T$ pulls $D'$ off $T$ there so that $|D' \cap T| < |D \cap T|$.  
Since $T$ is disjoint from $K$, this also shows $|D' \cap K| \leq |D \cap K|$.  
Yet this either contradicts that the disk $D$ realizes the wrapping number $\eta_K$ or that it was chosen among such disks to minimize its intersections with $T$.  
Hence every circle of $D \cap T$ is essential in $T$.

Since $D \cap T$ is non-empty, a circle of $D \cap T$ that is innermost in $D$ bounds a disk $\delta$ in $D$.  
Since $\bdry \delta \subset D \cap T$, it must be an essential curve in $T$.  
Thus $\delta$ is a meridional disk of $V$.  
As such, we must have $|\delta \cap K| \geq \eta_P$.  
Moreover, this also implies that $D \cap T$ is a collection of meridians of $V$.  
Viewing $k$ as a longitude of $V$ in $T$, it may be isotoped in $T$ to intersect each of these meridians just once.  
Hence $|D \cap T| = |D \cap k| \geq \eta_k$.  
If every curve of $D \cap T$ is innermost in $D$, then $V$ intersects $D$ in a collection of meridional disks.  
Since $K$ intersects each of these meridional disks of $V$ at least $\eta_P$ times, it would then follow that $\eta_K \geq \eta_k \eta_P$.  

So suppose that some curve of $D \cap T$ is not innermost in $D$.  
Let $P$ be the component of $D - V$ that contains $c = \bdry D$ as depicted in Figure~\ref{fig:diskandT}. 
\begin{figure}[h]
	\begin{center}
		\includegraphics[width=0.17\textwidth]{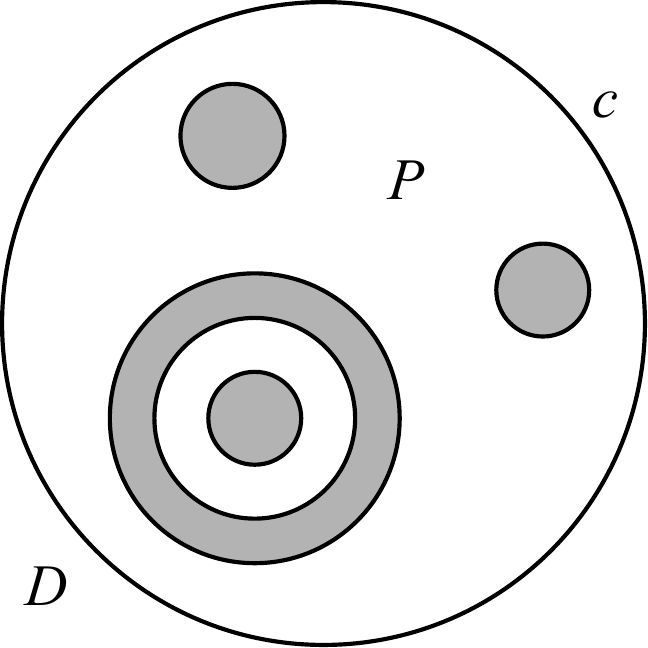}
		\caption{$D \cap T$ in $D$; shaded regions are inside of $V$.}
		\label{fig:diskandT}
	\end{center}
\end{figure}
Because $T = \partial V$ separates $c=\bdry D$ from $K$, $P \cap K = \emptyset$.  
The components of $\bdry P -c$ are components of $D \cap T$ and bound mutually disjoint disks in $D$.  
If one of these components is innermost in $D$, the subdisk it bounds is a meridional disk of $V$ that $K$ intersects at least $\eta_P$ times.  
If it is not innermost, then within the subdisk it bounds is at least one other component of $D \cap T$ that is innermost, and so the subdisk it bounds also intersects $K$ at least $\eta_P$ times.  
Hence this shows that $|D \cap K| \geq (|\bdry P|-1)\eta_P$ and $|D \cap T| > |\bdry P| -1$.

Now since every other component of $\bdry P$ besides $c$ belongs to $D \cap T$, each is a meridian of $T$.  
Thus each of these curves of $\bdry P - c$ can be connected by an annulus in $T$ to a parallel copy of $D_V$ to form a new disk $D'$.   
By an appropriate choice of these annuli and a small isotopy in a collar neighborhood of $T=\bdry V$, $D'$ may be taken to be an embedded disk with boundary $c$ that intersects $V$ in these $|\bdry P|-1$ copies of $D_V$.  
Hence $|D' \cap K| = (|\bdry P|-1)\eta_P$ and $|D' \cap T| = |\bdry P|-1$. But this contradicts our choice of $D$.
Thus every circle of $D \cap T$ is innermost in $D$, as desired.
\end{proof}

\begin{theorem}
\label{thm:satellite}
With the above setup, suppose that $\omega_{P} \ge \eta_k/\omega_k$, and $\eta_{P} \ge 2$.
Then there exists a constant $N > 0$ such that 
$c(K_n) > c(k_n)$ and 
$c(K_{-n}) > c(k_{-n})$ 
for all $n \ge N$. 

Furthermore, if $k$ is coherent and  the pattern has winding number $\omega_P \ge 2$, then there exists a constant $N' >0$ such that 
$c(K_n) > \omega_P^2 c(k_n)$ and $c(K_{-n}) > \omega_P^2 c(k_{-n})$ for all $n \geq N'$.
\end{theorem}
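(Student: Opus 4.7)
The plan is to bound $c(K_n)$ from below and $c(k_n)$ from above using Corollary~\ref{cor:boundsoncrossing}, then use the hypotheses to arrange that the lower bound outpaces the upper bound.

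First, apply Corollary~\ref{cor:boundsoncrossing} to the twist family $\{k_n\}$ to obtain a constant $C_k$ with
\[
c(k_n) \le C_k + n\,\eta_k(\eta_k-1) \qquad (n \ge 0),
\]
and apply it to $\{K_n\}$ to obtain a constant $C_K'$ with $c(K_n) \ge C_K' + nA$, where
\[
A = \begin{cases} \eta_K(\eta_K-1), & \omega_K=\eta_K,\\ \omega_K(\omega_K+1), & \omega_K<\eta_K. \end{cases}
\]
By Claim~\ref{claim:wrap_satellite}, $\omega_K=\omega_k\omega_P$ and $\eta_K=\eta_k\eta_P$. The hypothesis $\omega_P\ge\eta_k/\omega_k$ gives $\omega_K\ge\eta_k$, and $\eta_P\ge 2$ gives $\eta_K\ge 2\eta_k$. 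Hence $A>\eta_k(\eta_k-1)$ in both cases, since $\eta_K(\eta_K-1)\ge 2\eta_k(2\eta_k-1)$ in the coherent subcase and $\omega_K(\omega_K+1)\ge\eta_k(\eta_k+1)$ in the non-coherent one. So for $n$ sufficiently large the linear lower bound on $c(K_n)$ strictly exceeds the linear upper bound on $c(k_n)$, yielding $c(K_n)>c(k_n)$.

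For negative twisting, I pass to the mirror $\overline{K\cup k\cup c}$. This preserves the satellite structure and leaves all wrapping and winding numbers invariant, so the hypotheses are still satisfied. Since mirroring preserves crossing number, $c(K_{-n})=c(\overline{K}_n)$ and $c(k_{-n})=c(\overline{k}_n)$, and the same argument applied to the mirrored families gives the inequality for $n\le -N$.

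For the furthermore statement, $k$ coherent means $\omega_k=\eta_k$, so Corollary~\ref{cor:boundsoncrossing}(1) sharpens the upper bound to $c(k_n)\le C_k + n\,\omega_k(\omega_k-1)$. Comparing with $c(K_n)\ge C_K'+nA$, it suffices to verify $A>\omega_P^2\,\omega_k(\omega_k-1)$. When $P$ is coherent this reduces to $\omega_k\eta_P-1 > \eta_P(\omega_k-1)$, i.e.\ $\eta_P>1$, and when $P$ is not coherent it reduces to $\omega_k\omega_P+1 > \omega_P(\omega_k-1)$, i.e.\ $1>-\omega_P$; both hold under $\eta_P\ge 2$ and $\omega_P\ge 2$. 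Mirroring again delivers the bound for negative twisting. The only real obstacle is the bookkeeping: verifying the strict inequality between the two competing linear growth rates in each combination of (coherent/non-coherent for $K$) and (coherent/non-coherent for $P$). The hypothesis $\omega_P\ge\eta_k/\omega_k$ is tuned precisely so that $\omega_K(\omega_K+1)$ barely exceeds $\eta_k(\eta_k-1)$ in the non-coherent case for $K$, which is where the argument would otherwise fail.
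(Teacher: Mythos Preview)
Your proof is correct and follows essentially the same approach as the paper: compare the linear upper bound on $c(k_n)$ with the linear lower bound on $c(K_n)$ coming from Corollary~\ref{cor:boundsoncrossing} (equivalently, from the inequalities in the proof of Theorem~\ref{thm:stablecrossing}), verify that the growth rate $A$ for $K$ strictly exceeds $\eta_k(\eta_k-1)$ (respectively $\omega_P^2\eta_k(\eta_k-1)$) under the stated hypotheses, and handle negative twisting by mirroring. Your case split for the ``furthermore'' statement according to whether $P$ is coherent is in fact slightly more explicit than the paper's treatment, which writes $x_K\ge\omega_K+1$ without separately addressing the fully coherent subcase.
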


\begin{proof}
First we note that the assumption $\omega_{P} \ge \eta_k/\omega_k$ implies that $\omega_k \ne 0$, hence $\omega_k \ge 1$ and $\omega_P \ge 1$ (because $\eta_k \ge \omega_k \ge 1$). 
Also $\omega_K = \omega_k \omega_P \ge 1$.

The stable crossing number bounds 
\[
\overline{c_s}(k_n) \le \eta_k(\eta_k-1)\quad \textrm{and}\quad \underline{c_s}(K_n) \ge \omega_K(\omega_K + 1)\]
of Theorem~\ref{thm:stablecrossing} arise in its proof from the inequalities
\[ c(k_n) \le c(D_{k_0}) + n \cdot \eta_k(\eta_k-1) \quad \textrm{and}\quad  c(K_n) \ge G + n \cdot \omega_K x_K\]
where the first holds for any `nice' diagram $D_{k_0}$ of $k_0$ and all $n \geq 0$ while the second holds for some constants $G$ and $N_0$ (depending on the link $K_0 \cup c$) and for all $n \ge N_0$. (Here $x_K$ is the meridional norm of $K$ with respect to $c$.)
 Hence the difference
 \[ c(K_n) - c(k_n) \ge G-c(D_{K_0}) + n \cdot \left( \omega_K x_K - \eta_k(\eta_k-1) \right)\]
is positive for all $n \geq N_+$ for some $N_+ \geq N_0$ if and only if
\begin{equation}
\label{eqn:windingsatellite_vs_wrappingpattern}
\omega_K x_K > \eta_k(\eta_k-1).
\end{equation}
If $\omega_k=\eta_k$ and $\omega_P = \eta_P$ so that $\omega_K = \omega_k \omega_P = \eta_k \eta_P =  \eta_K$ by Claim~\ref{claim:wrap_satellite} and $x_K = \eta_K-1$ by Lemma~\ref{lem:meridionalnorm}, 
then 
\[
\omega_K x_K = \eta_K (\eta_K -1) = \eta_k\eta_P(\eta_k \eta_P -1).
\]
Since $\eta_P \ge 2$, 
we have $\eta_k\eta_P(\eta_k \eta_P -1) > \eta_k(\eta_k -1)$, 
which implies  Inequality~(\ref{eqn:windingsatellite_vs_wrappingpattern}). 

Otherwise, 
$\omega_k < \eta_k$ or $\omega_P < \eta_P$. 
Then by Claim~\ref{claim:wrap_satellite}, $\omega_K = \omega_k\omega_P < \eta_k\eta_P = \eta_K$, 
and Lemma~\ref{lem:meridionalnorm} shows that $x_K \geq \omega_K+1$.  
Hence $\omega_K x_K \geq \omega_K(\omega_K+1) > \omega_K(\omega_K -1)$ and then Inequality~(\ref{eqn:windingsatellite_vs_wrappingpattern}) holds if $\omega_K \geq \eta_k$.

Since $\omega_K = \omega_k \omega_P$, 
the assumption $\omega_P \geq \eta_k/\omega_k$ shows that 
$\omega_K = \omega_k \omega_P \ge \omega_k (\eta_k / \omega_k) = \eta_k$. 

It now follows that 
\[c(K_n) > c(k_n)\]
for all $n \geq N_+$ whenever $\omega_P \geq \eta_k/\omega_k$.

\medskip

Similarly,
the difference
 \[ c(K_n) - \omega_P^2 c(k_n) \ge G-\omega_P^2 c(D_{K_0}) + n \cdot \left( \omega_K x_K - \omega_P^2 \eta_k(\eta_k-1) \right)\]
is positive for all $n \geq N'_+$ for some $N'_+ \geq N_0$ if and only if
\[\omega_K x_K > \omega_P^2 \eta_k (\eta_k-1).\]
Thus, if $\omega_k=\eta_k$, 
then $x_K \geq \omega_K + 1 = \omega_k \omega_P +  1 = \eta_k\omega_P + 1$ so that 
\[
\omega_K x_K 
\ge \omega_k \omega_P(\eta_k \omega_P + 1) 
= \eta_k \omega_P(\eta_k \omega_P + 1)
> \eta_k^2\omega_P^2 - \eta_k\omega_P \geq \eta_k^2 \omega_P^2 - \eta_k \omega_P^2 = \omega_P^2\eta_k(\eta_k-1)
\]

as long as $\omega_P \geq 1$.
Then the above inequality holds, and we have that 
\[c(K_n) > \omega_P^2 c(k_n)\]
for all $n \geq N'_+$ whenever $\omega_k=\eta_k$ and $\omega_P \geq 1$.

\medskip

Analogous arguments show that we have a constants $N_-$ such that $c(k_{-n}) < c(K_{-n})$ 
for all $n \ge N_{-}$
and $N'_-$ such that $c(k_{-n}) < \omega_0^2 c(K_{-n})$ 
for all $n \ge N'_{-}$.
So now let $N$ be $\mathrm{max}\{ N_{+},\ N_{-}\}$ and $N'$ be $\mathrm{max}\{ N'_{+},\ N'_{-}\}$.
Then we have both that
\[c(K_n) > c(k_n) \quad \mbox{and} \quad c(K_{-n}) > c(k_{-n})\]
for all $n \geq N$ whenever $\omega_k \geq 1$ and $\omega_P \geq \eta_k/\omega_k$
and that
\[c(K_n) > \omega_P^2 c(k_n) \quad \mbox{and} \quad c(K_{-n}) \geq \omega_P^2 c(k_{-n})\]
for all $n \geq N'$ whenever $\omega_k=\eta_k$ and $\omega_P \geq 1$.
\end{proof}

\section{Stable braid index for a twist family of knots}
\label{sec:stable_braid_index}

\begin{theorem}
\label{thm:stablebraid}
Let $\{ K_n \}$ be a twist family of knots with wrapping number $\eta$ and winding number $\omega$. 
Then we have the following.  
\[ 0 \leq \underline{b_s}(K_n) \leq \overline{b_s}(K_n) \leq (\eta-\omega)/2.\]
\end{theorem}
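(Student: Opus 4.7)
The lower bound is trivial: since $b(K_n)\geq 1$ for every $n$, we have $\underline{b_s}(K_n) = \varliminf_{n\to\infty} b(K_n)/n \geq 0$, and $\underline{b_s}(K_n) \leq \overline{b_s}(K_n)$ by definition. The content is the upper bound $\overline{b_s}(K_n) \leq (\eta-\omega)/2$. My plan is to invoke Yamada's inequality $b(K) \leq s(D)$ (cf.\ Claim~\ref{claim:genus_braid_index_crossing}) and construct, for each $n$, a diagram $D_n$ of $K_n$ with Seifert-circle count
\[
s(D_n) \leq \tfrac{\eta-\omega}{2} \cdot n + C
\]
for some constant $C$ depending only on $(K,c)$. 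Dividing by $n$ and taking $\limsup$ then yields the claim.

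To build $D_n$, start from a ``nice'' diagram of $K\cup c$ in which $c$ bounds a flat disk $\Delta$ transverse to $K$ in $\eta$ points, and write $p = (\eta+\omega)/2$ and $q = (\eta-\omega)/2$ for the numbers of positively and negatively oriented intersection points of $K$ with $\Delta$. Isotope $K$ so that these $p$ positive and $q$ negative strands pass through $\Delta$ in two separated, parallel bundles on opposite sides of $\Delta$. Obtain $D_n$ by inserting $n$ full twists on the resulting $\eta$ parallel arcs in the twist region.

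The crucial step is estimating $s(D_n)$ via an adaptation of Yamada's braiding algorithm that respects the bundle decomposition. Each full twist on the $\eta$-cable factors as (i) a full twist internal to the positive bundle, (ii) a full twist internal to the negative bundle, and (iii) one round of inter-bundle cabling in which the two bundles wind once around each other. The crossings in (i) and (ii) are between parallel strands, so Seifert smoothing gives vertical arcs and generates no new Seifert circles. The $2pq$ antiparallel crossings in (iii) smooth to horizontal arcs and typically do produce new Seifert circles. The crux is that the inter-bundle cable (iii) can be drawn so that it contributes at most $q$ new Seifert circles per full twist, one per negatively oriented strand. The coherent endpoint $\omega=\eta$ (so $q=0$) is a sanity check: there, piece (iii) is vacuous and $K_n$ is in fact a closed braid on $\eta$ strands for every $n$, forcing $\overline{b_s}(K_n) = 0 = (\eta-\omega)/2$.

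The main obstacle is verifying the $q$-per-twist bound on piece (iii). A naive braid representative of $\Delta^2 \in B_\eta$ applied to a mixed-orientation cable typically produces significantly more than $q$ new Seifert circles per twist (up to $\sim pq$ from pairwise clasps, or even more if the twists of adjacent full twists interact to spawn ``boundary'' circles). The adaptation must carefully re-route the inter-bundle cabling --- roughly, passing the entire $p$-bundle as a single thickened strand through the $q$-bundle --- so that each negatively oriented strand accounts for exactly one new Seifert circle per twist. Granting this, $s(D_n) \leq s(D_0) + qn + C$, whence $b(K_n) \leq s(D_n) \leq qn + O(1)$ by Yamada's theorem, and therefore $\overline{b_s}(K_n) \leq q = (\eta-\omega)/2$.
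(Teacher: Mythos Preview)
Your lower bound matches the paper exactly, and your overall strategy---bound $b(K_n)$ above by the number of strands/Seifert circles in a carefully chosen diagram $D_n$ whose count grows by $q=(\eta-\omega)/2$ per twist---is the right one. But as you yourself flag with ``Granting this,'' the proof is incomplete: the crucial step, that the inter-bundle part (iii) can be drawn so as to contribute only $q$ new Seifert circles per full twist, is asserted and not established. Your own cautionary computation is on point: the naive twist-region diagram on two antiparallel strands produces roughly $2n$ new Seifert circles after $n$ full twists, not $n$, so some concrete ``re-routing'' really is required, and you have not supplied it.

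The paper closes this gap by a cleaner device that avoids Seifert-circle bookkeeping altogether. Rather than starting from an arbitrary nice diagram and trying to control $s(D_n)$, it first places a braid axis $A$ for $K$ \emph{through} the twisting disk $\Delta$, precisely at the point separating the $p$ positive strands from the $q$ negative ones, and then runs Alexander's braiding algorithm relative to $A$ using triangle moves supported away from $\Delta$. This yields a closed $N$--braid diagram of $K$ in which $c$ still bounds $\Delta$ with the strands sorted by sign. From there an explicit isotopy (drawn in a figure) shows that one full twist along $c$ can be absorbed into a closed braid on $N+q$ strands; iterating gives $b(K_n)\le N+nq$ directly. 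Your Seifert-circle plan would in fact be recovered from this construction (a closed $m$--braid has exactly $m$ Seifert circles), so the missing ingredient in your argument is exactly this placement of the braid axis inside the twisting disk and the accompanying isotopy---not a generic ``adaptation of Yamada'' but a specific geometric move you would still need to exhibit.
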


Theorem~\ref{thm:stablebraid} immediately shows that the Stable Braid Index Conjecture holds true for coherent twist families.  

\begin{corollary}
\label{cor:coherentbraid}
Let $\{ K_n \}$ be a coherent twist family of knots. 
Then $b_s(K_n) = 0$. \qed
\end{corollary}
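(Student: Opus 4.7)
The statement to prove is that a coherent twist family has $b_s(K_n)=0$, where coherent means the winding number equals the wrapping number, i.e.\ $\omega = \eta$. Since the result is explicitly labeled as a corollary of Theorem~\ref{thm:stablebraid}, my plan is to derive it as a direct specialization of that theorem rather than to build any new machinery.

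The plan is as follows. First, I would unpack the hypothesis: by the definition of coherent given in the introduction (following the statement of Theorem~\ref{thm:stablecrossing}), the assumption that $\{K_n\}$ is a coherent twist family means precisely $\omega = \eta$. Second, I would invoke Theorem~\ref{thm:stablebraid}, which gives the chain of inequalities
\[
0 \leq \underline{b_s}(K_n) \leq \overline{b_s}(K_n) \leq \frac{\eta-\omega}{2}.
\]
Substituting $\omega=\eta$ into the rightmost term collapses the upper bound to $0$, so the entire chain squeezes to equality. Consequently $\underline{b_s}(K_n)=\overline{b_s}(K_n)=0$, and in particular the limit $b_s(K_n) = \lim_{n\to\infty} b(K_n)/n$ exists and equals $0$, as required.

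There is essentially no obstacle here: the only substantive content is already packaged into Theorem~\ref{thm:stablebraid}, and the corollary is a one-line squeeze. The only minor point worth noting is that the conclusion concerns a genuine limit (not just the lim sup or lim inf), but this is automatic once the two one-sided quantities agree, so no additional argument is needed. I would therefore present the proof as a short remark—perhaps a single sentence—citing Theorem~\ref{thm:stablebraid} with the substitution $\omega=\eta$.
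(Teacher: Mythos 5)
Your proposal is correct and matches the paper's own reasoning exactly: the paper marks this corollary with \verb|\qed| precisely because setting $\omega=\eta$ in the inequality $0 \leq \underline{b_s}(K_n) \leq \overline{b_s}(K_n) \leq (\eta-\omega)/2$ of Theorem~\ref{thm:stablebraid} squeezes everything to $0$. No further comment is needed.
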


\begin{proof}[Proof of Theorem~\ref{thm:stablebraid}]
Since $b(K_n) \geq 1$, we trivially have the lower bound $0 \le  \underline{b_s}(K_n)$.
To show the upper bound, we observe that $K$ has a diagram as a closed braid in which the twisting circle and braid axis are conveniently arranged.

\begin{claim}
\label{claim:braidtwist}
Let $c$ be the twisting circle for $K$.
Then there exists a braid axis $A$ for $K$ that intersects $c$ in two points and a $2$--sphere containing $A \cup c$  so that
\begin{itemize}
\item each hemisphere bounded by $A$ is met by $K$ coherently and 
\item one hemisphere bounded by $c$ is split by an arc of $A$ into a disk that $K$ intersects $(\eta+\omega)/2$ times and another disk that $K$ intersects $(\eta-\omega)/2$ times.
\end{itemize}
In particular there is a diagram of $K \cup c$ as in Figure~\ref{fig:braidtwist}, 
in which the braid axis $A$ and the $2$--sphere is omitted. 
\end{claim}

\begin{proof}
Our twisting circle $c$ bounds a disk $\Delta$ that $K$ intersects $\eta$ times, 
$\frac{\eta + \omega}{2}$ times in one direction and $\frac{\eta-\omega}{2}$ times in the other.  
After an isotopy, $K$ and $\Delta$ may be assumed to appear as in Figure~\ref{fig:initialdiagram}(left) where a properly embedded arc $\alpha$ in $\Delta$ separates the intersections of $K$ of opposite directions.  

\begin{figure}[h]
\centering
\includegraphics[width=.18\textwidth]{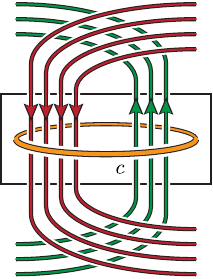}
\caption{A local picture of an initial diagram for $K \cup c$, shown for $\eta=7$ and $\omega=1$.}
\label{fig:initialdiagram}
\end{figure}

Working in $\R^3$, after a slight tilt and isotopy we then obtain a diagram in $\R^2$ of $K$ in which $\Delta$ projects to a line segment $\delta$ that $K$ crosses $\frac{\eta + \omega}{2}$ times in one direction and $\frac{\eta-\omega}{2}$ times in the other direction and $\alpha$ projects to a point $a$ in $\delta$ that separates these two sets of intersections.  
Furthermore we may arrange that in a neighborhood of $\delta$ there are no other arcs of $K$. 
Extend $\delta$ to a line, let $S_\delta$ be the preimage of this line in $\R^3$, and observe that $\Delta \subset S_\delta$.  
Let $A \subset S_\delta$ be the preimage of $a$ in $\R^3$.   
We will braid $K$ about $A$ by an isotopy with support outside of a neighborhood of $\Delta$.

View $K$ as an oriented polygonal knot in $\R^3$ in which the intersections with $S_\delta$ are all interior points of edges of $K$ that do not cross other edges of $K$ in the projection to $\R^2$. 
The edges that meet $\Delta$ all wind in the same direction around $A$.  
We may now apply Alexander's braiding algorithm \cite{Alex} almost verbatim, ensuring that any triangle move is disjoint from $\Delta$.  
In essence, if an edge $e$ of $K$ goes in the opposite direction, we find a solid triangle $T$ with $e$ as one edge so 
that $A$ intersects the interior of $T$ while 
$T-e$ is disjoint from $K \cup \Delta$, and then we replace $e$ with the other two edges of $\bdry T$.   
See Figure~\ref{fig:Alexander_braiding}.

Finally, compactifying to $S^3$, the plane $S_\delta$ becomes the desired sphere that contains the twisting circle $c$ and braid axis $A$.
\end{proof}

\begin{figure}[h]
\centering
\includegraphics[width=0.8\textwidth]{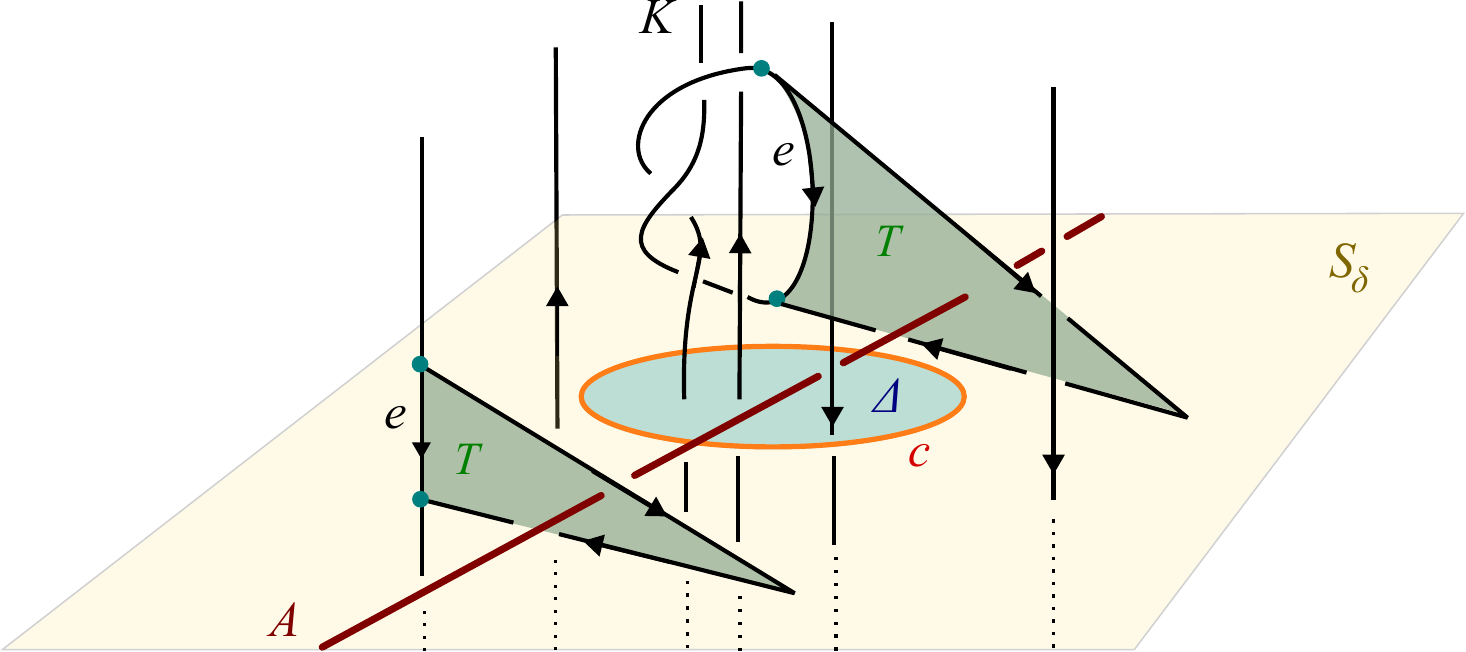}
\caption{Two triangle moves for braiding the oriented knot $K$ about the axis $A$}
\label{fig:Alexander_braiding}
\end{figure}

Start now from the diagram for $K \cup c$ in Figure~\ref{fig:braidtwist}(left) as ensured by Claim~\ref{claim:braidtwist} in which $K$ is braided with the braid diagram $D$.
Then, following the moves of Figure~\ref{fig:braidtwist}, a single twist along $c$ increases the braid index of the shown closed braid presentation of $K$ by $(\eta-\omega)/2$.  

\begin{figure}
\centering
\includegraphics[width=.8\textwidth]{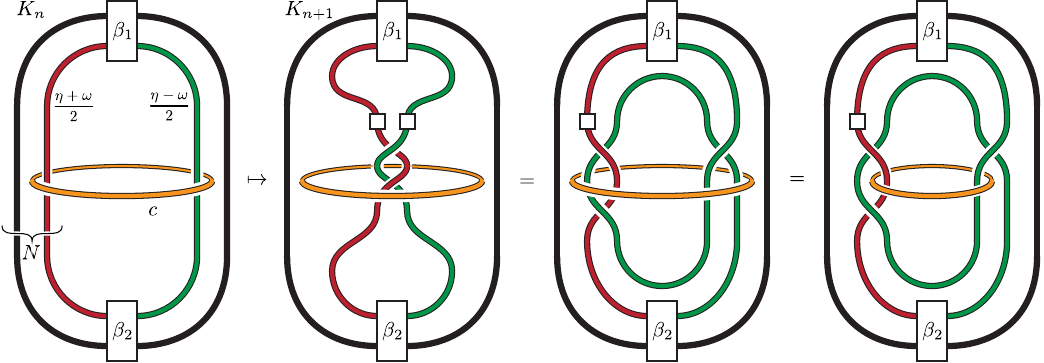}
\caption{A knot $K_n$ with a twisting circle $c$ has a diagram as in the first picture. 
Parallel strands are grouped together. 
A small square indicates a full positive twist on the group of strands running through it. 
Here $K_n$ is shown as a closed braid with braid index $N$; both $\beta_1$ and $\beta_2$ represent $N$--braids. 
The second, third, and fourth show the knot $K_{n+1}$ resulting from a full twist along $c$ followed by some isotopies into a closed braid with braid index $N+(\eta-\omega)/2$.
}
\label{fig:braidtwist}
\end{figure}

Hence $b(K_n) \le b(D) + n(\eta-\omega)/2$, hence 
\begin{equation*}
\overline{b_s}(K_n) = \varlimsup_{n \to \infty}\frac{b(K_n)}{n} \le 
\lim_{n \to \infty}\frac{b(D) + n\eta(\eta-1)/2}{n} 
=  (\eta-\omega)/2
\end{equation*}
as claimed.
\end{proof}

\bigskip

\section{Stable invariants of knots in a twist family with wrapping number 2 and winding number 0.}
\label{sec:wrap2wind0}

In this section we restrict attention to twist families of knots $\{K_n\}$ with twisting circle $c$, wrapping number $\eta=2$, and winding number $\omega=0$.  In particular, $c$ is a `crossing circle' for $K$, bounding a disk that $K$ intersects in $2$ points with opposite sign.  Let $K_{\infty}$ be the `resolution' of this crossing, the link that results from banding $K$ to itself along a simple arc in this disk.

A condition on the degrees of the HOMFLYPT polynomials of $K=K_0$ and $K_{\infty}$ enables the stable braid index to be determined from the Morton-Franks-Williams inequality \cite{Morton,FW}.  Ohyama's bound on crossing number in terms of braid index \cite{Ohyama} then applies to give the stable crossing number.
Here $E(P_L)$ is the maximum degree of $\ell$ in the HOMFLYPT polynomial $P_L(\ell,m)$ of an oriented link $L$.  

\begin{theorem}
\label{thm:eta=2}
Let $\{K_n\}$ be a twist family of knots with $\eta = 2$ and $\omega = 0$. Let $K_{\infty}$ be the associated `resolution' link.   Assume that $E[P_{K_0}] \ne -1+E[P_{K_{\infty}}]$.
Then $b_s(K_n) = 1$ and $c_s(K_n) = 2$. 
\end{theorem}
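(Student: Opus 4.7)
The plan is to obtain a linear-in-$n$ lower bound on $b(K_n)$ via the Morton-Franks-Williams (MFW) inequality applied to a HOMFLYPT recurrence for the twist family, and then extract the crossing number via Ohyama's bound $c(L) \geq 2b(L) - 2$. Since $\eta=2$ and $\omega=0$, the circle $c$ is a crossing circle for $K$: it bounds a disk meeting $K$ in two antiparallel points, and a full twist along $c$ inserts two crossings of the same sign into a twist region whose two strands are antiparallel. Applying the HOMFLYPT skein relation at a single crossing of this twist region in $K_n$, switching that crossing to the opposite sign creates an $R_2$-cancelable pair yielding $K_{n-1}$, while the oriented smoothing of antiparallel strands produces a cap-cup whose residual self-crossings of the resulting arc can be unkinked by $R_1$ moves, producing $K_\infty$ (independent of $n$ and of which crossing of the twist region is resolved). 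Up to signs determined by the skein convention and the sign of the twist, we thereby obtain a recurrence of the form
\[ P_{K_n} \;=\; \ell^{\pm 2}\, P_{K_{n-1}} \;+\; \ell^{\pm 1}\, m\, P_{K_\infty}. \]

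Next I would track $E[P_{K_n}]$ and $e[P_{K_n}]$ by induction on $n$. With signs fixed so that the twist family's top $\ell$-degree grows, each step gives $E[P_{K_n}] = \max\!\bigl(E[P_{K_{n-1}}] + 2,\, E[P_{K_\infty}]+1\bigr)$ (and the analogous $\min$ for $e$), provided the two contributions on the right have distinct $\ell$-degrees. The hypothesis $E[P_{K_0}] \ne E[P_{K_\infty}] - 1$ is exactly what rules out any such top-degree coincidence along the entire sequence: if $E[P_{K_0}] > E[P_{K_\infty}] - 1$, the $P_{K_{n-1}}$ term dominates at every step and $E[P_{K_n}] = E[P_{K_0}] + 2n$; if $E[P_{K_0}] < E[P_{K_\infty}] - 1$, then $E[P_{K_1}] = E[P_{K_\infty}] + 1$ and the $P_{K_{n-1}}$ term dominates from the next step onward, giving $E[P_{K_n}] = E[P_{K_\infty}] + 2n - 1$ for $n \ge 1$. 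Either way $E[P_{K_n}]$ grows linearly with slope $2$. A parallel but easier analysis shows that $e[P_{K_n}]$ stabilizes at $e[P_{K_\infty}] + 1$ for large $n$; even a single transient one-step cancellation at the bottom degree only delays stabilization by one iteration, since the $P_{K_\infty}$ term then strictly dominates the $\ell$-minimum. Thus $E[P_{K_n}] - e[P_{K_n}] = 2n + O(1)$.

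By MFW, $b(K_n) \ge \tfrac{1}{2}\bigl(E[P_{K_n}] - e[P_{K_n}]\bigr) + 1 \ge n + O(1)$, so $\underline{b_s}(K_n) \ge 1$; combined with $\overline{b_s}(K_n) \le (\eta-\omega)/2 = 1$ from Theorem~\ref{thm:stablebraid}, this forces $b_s(K_n) = 1$. Ohyama's inequality then gives $c(K_n) \ge 2b(K_n) - 2 \ge 2n + O(1)$, so $\underline{c_s}(K_n) \ge 2$, and together with $\overline{c_s}(K_n) \le \eta(\eta-1) = 2$ from Theorem~\ref{thm:stablecrossing} we conclude $c_s(K_n) = 2$. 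The main obstacle is the degree-tracking step: one must justify rigorously that the smoothing really is $K_\infty$ (up to $R_1$) regardless of $n$ or the chosen crossing, and then handle the two sub-cases $E[P_{K_0}] \gtrless E[P_{K_\infty}]-1$ carefully enough to preclude any top-degree cancellation from arising at any subsequent step of the recurrence.
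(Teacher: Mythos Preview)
Your proposal is correct and follows essentially the same route as the paper: the HOMFLYPT skein recurrence at a crossing of the twist region, degree tracking to see that $E[P_{K_n}]-e[P_{K_n}]=2n+O(1)$, the Morton--Franks--Williams inequality for the lower bound on $b(K_n)$, Theorem~\ref{thm:stablebraid} for the matching upper bound, and then Ohyama's inequality together with Theorem~\ref{thm:stablecrossing} for $c_s$. The one stylistic difference is that the paper solves the recurrence in closed form,
\[
P_{K_n}=(-\ell^{2})^{n}P_{K_0}+\bigl((-\ell^{2})^{n-1}+\cdots+1\bigr)(-\ell m)\,P_{K_\infty},
\]
from which the top and bottom $\ell$-degrees can be read off directly without your two-case induction; this sidesteps the need to argue separately that no cancellation re-enters at a later step.
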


Similar work on the braid index of twisted satellite links of wrapping number $2$ was done by Nutt \cite{Nutt}. Indeed it follows from the paragraph after the proof of \cite[Lemma 3.11]{Nutt} that our assumption on the degrees of the HOMFLYPT polynomials may be replaced by the assumption that $P_{K_0 \sqcup U} \neq P_{K_\infty}$.

\medskip

Let us now  recall basic facts about the HOMFLYPT polynomial; see \cite{Lickorish} for example.
The HOMFLYPT polynomial $P_L(\ell, m)$ of an oriented link $L$ is defined using skein relations: 
\begin{itemize}
\item $P(\textrm{unknot}) = 1$
\item $\ell P_{L_+} + \ell^{-1} P_{L_-} + m P_{L_0} = 0$, 
\end{itemize}
where $L_{+}, L_{-}, L_0$ are links formed by crossing smoothing changes on a local region of a link diagram, 
as indicated in Figure~\ref{fig:skein}. 

\begin{figure}[h]
\includegraphics[width=.4\textwidth]{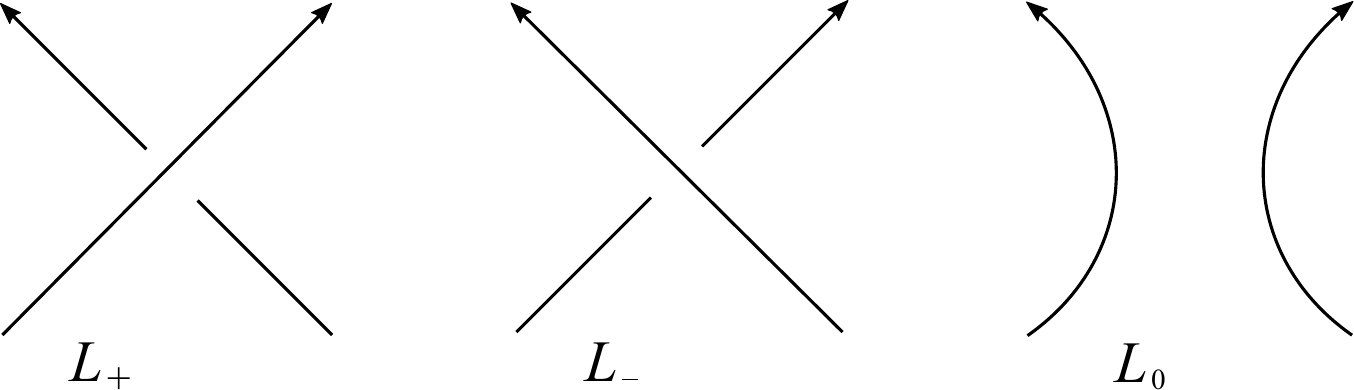}
\caption{Skein triple}
\label{fig:skein}
\end{figure}

The HOMFLYPT polynomial $P_L(\ell, m)$ specializes to the Jones polynomial $V_L(A)$ 
as follows: 
\[
V_L(A) = P_L(i A^4, i(A^{-2}-A^2)), 
\]
where $i = \sqrt{-1}$. 
Furthermore, it is an elementary exercise to show that $V_L(\sqrt{-1}) = (-2)^{r-1}$ when $L$ is a link of $r$ components.
Thus $V_L(A)  \ne 0$ for any link $L$.  Hence this specialization shows that $P_L(\ell, m)$ cannot be zero for any link as well.

\begin{proof}
First we determine the HOMFLYPT polynomial of $K_n$ in terms of the HOMFLYPT polynomial of $K=K_0$ and of the link $K_{\infty}$.

For $n\geq1$, the skein relation at a crossing in the twist region gives the links $L_+ = K_{n-1}$, $L_- = K_{n}$, and $L_0 = K_{\infty}$ since $\omega=0$.
Therefore we have
\[ P_{K_{n}} =- \ell^{2} P_{K_{n-1}} - \ell m P_{K_{\infty}}.  \]
So inductively we may then calculate:
\begin{align*}
P_{K_n} &=- \ell^{2} (- \ell^{2} P_{K_{n-2}} - \ell m P_{K_{\infty}}) - \ell m P_{K_{\infty}}  \\
&=(- \ell^{2})^2 P_{K_{n-2}} +((- \ell^{2}) + 1)(- \ell m) P_{K_{\infty}} \\
&=(- \ell^{2})^2 (- \ell^{2} P_{K_{n-3}} - \ell m P_{K_{\infty}}) +((- \ell^{2}) + 1)(- \ell m) P_{K_{\infty}} \\
&=(- \ell^{2})^3 P_{K_{n- 3}} +((- \ell^{2})^2 +(- \ell^{2}) + 1)(- \ell m) P_{K_{\infty}} \\
&= (- \ell^{2})^{n} P_{K_{0}} +((- \ell^{2})^{n-1} + \dots + (- \ell^{2})^2 +(- \ell^{2}) + 1)(- \ell m) P_{K_{\infty}}
\end{align*}

Since $P_{K_{\infty}}(\ell, m) \ne 0$ and $E[P_{K_0}] \ne -1+E[P_{K_{\infty}}]$ by assumption, we have
\begin{align*}
E[P_{K_n}] &= \max( 2n+E[P_{K_0}], 2n-1+E[P_{K_{\infty}}]) = 2n+ \max( E[P_{K_0}], -1+E[P_{K_{\infty}}])\\
e[P_{K_n}]&= \min( 2n+e[P_{K_0}], 1+e[P_{K_{\infty}}]).
\end{align*}
So for $n \gg 0$, $E[P_{K_n}] =  2n+E_0$ for some constant $E_0$ and 
$e[P_{K_n}]=  1+e[P_{K_{\infty}}]$.
The Morton-Franks-Williams inequality \cite{Morton,FW} 
\[
b(K_n) \ge \frac{1}{2}(E[P_{K_n}] - e[P_{K_n}]) + 1
\]
gives 
$b(K_n) \geq \frac12 ((2n +E_0) - (1+e[P_{K_\infty}]) + 1 = n + \frac{1}{2}(E_0 -e[P_{K_{\infty}}] + 1)$
from which we obtain $\underline{b_s}(K_n)\geq 1$.  Since we know $\overline{b_s}(K_n) \leq \frac{2-0}{2}=1$ by Theorem~\ref{thm:stablebraid}, it follows that $b_s(K_n) = 1$.

Since $2b(K)-2 \leq c(K)$  by \cite{Ohyama}, we also have $2\underline{b_s}(K_n) \leq \underline{c_s}(K_n)$.   Because we have shown that $\underline{b_s}(K_n)=b_s(K_n)\geq 1$, it follows that $\underline{c_s}(K_n)\geq 2$.  
However, because $\eta=2$ so that Theorem~\ref{thm:stablecrossing} implies $\overline{c_s}(K_n) \leq \eta(\eta-1) =2$, we may conclude that $c_s(K_n)=2$.
\end{proof}

\section{Examples}
\label{sec:examples}

We conclude with a couple of examples. 

\begin{example}[{\bf twisted cables}]
\label{cables}
Consider the $(p,q)$--cable $K_{p,q}$ of a knot $K$ where $\eta=q >1$ is the wrapping number.  
Then $K_{p, q}$ lies in interior of a tubular neighborhood $V$ of $K$ with meridian $c$ that $K_{p,q}$ coherently wraps around $q$ times. 
Twisting $K_{p,q}$ about this meridian $c$, 
we obtain a twist family of cable knots $\{ K_{p+qn, q} \}$. 
Then $c_s(K_{p+qn, q}) = q(q-1)$ by Theorem~\ref{thm:stablecrossing} independent of the companion knot $K$. 
\end{example}

\begin{figure}[!ht]
\includegraphics[width=0.5\linewidth]{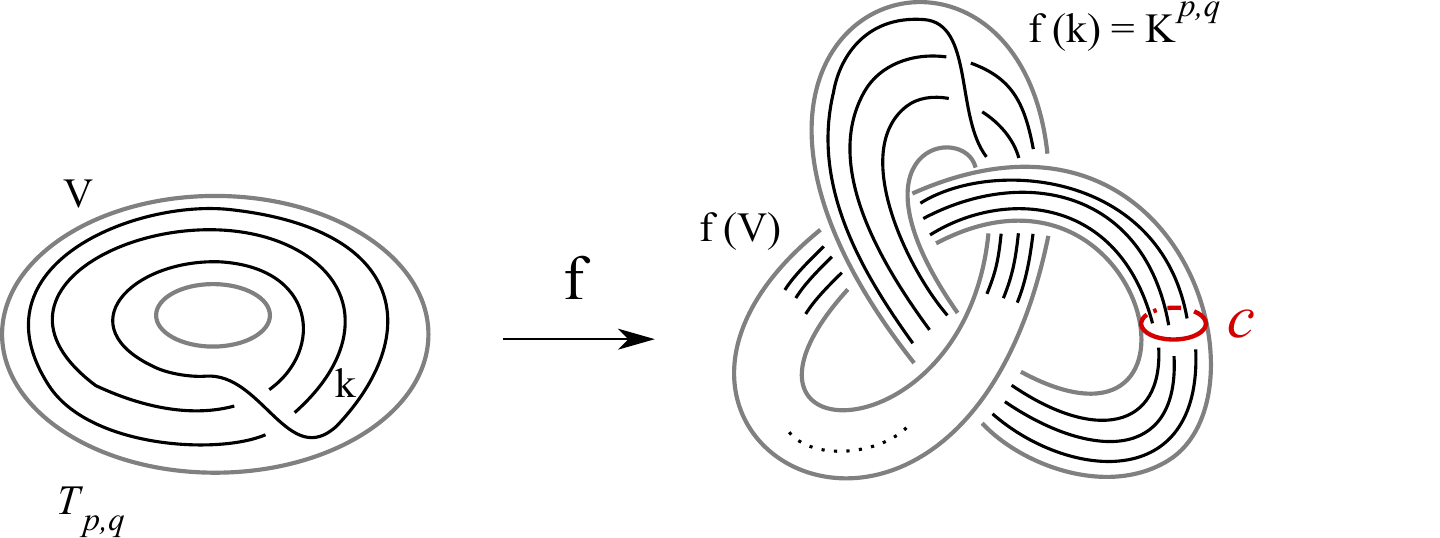}
\caption{A $(p, q)$--cable knot $K_{p, q}$ and twisting circle $c$; $q= 3$}
\label{cable}
\end{figure}

\medskip
\begin{example}
\label{alternating}
Let $K$ be a knot with a diagram $D$ and two unknots $c$ and $c'$ encircling a crossing as in Figure~\ref{wrap2_twist_2} so that $c$ is coherent and $c'$ is not. 
Hence Theorems~\ref{thm:stablecrossing} and \ref{thm:stablebraid} apply to $K \cup c$, but we must check some HOMFLYPT polynomials to apply Theorem~\ref{thm:eta=2} to $K \cup c'$. 
Since the knot $K$ is an unknot, $P_K(\ell,m)=1$. For the twisting circle $c'$, the link $K_{\infty}$ is the figure eight knot with a meridional circle.  
Hence $\eta=2$ since otherwise $K_{\infty}$ would be a split link. 
\[P_{K_{\infty}}(\ell,m) = -{\ell m^{3}} + ({2}{\ell^{3}} + {2}{\ell} + {\ell^{-1}}){m} + (-{\ell^{5}} - {2}{\ell^{3}} - {2}{\ell} - {\ell^{-1}}){m^{-1}}\]
as calculated by KLO \cite{KLO}.  
Therefore $E[P_{K_{\infty}}] - 1 = 5-1=4$ is not $E[P_{K}] = 0$ so that Theorem~\ref{thm:eta=2} does indeed apply to $K \cup c'$.
Hence for the twisting circles $c$ and $c'$ we have
\[
b_s(K_{c,n}) = 0, \quad c_s(K_{c,n})=2 \qquad \mbox{and} \qquad b_s(K_{c',n}) = 1, \quad c_s(K_{c',n}) = 2.
\]

\begin{figure}[!ht]
\includegraphics[width=0.8\linewidth]{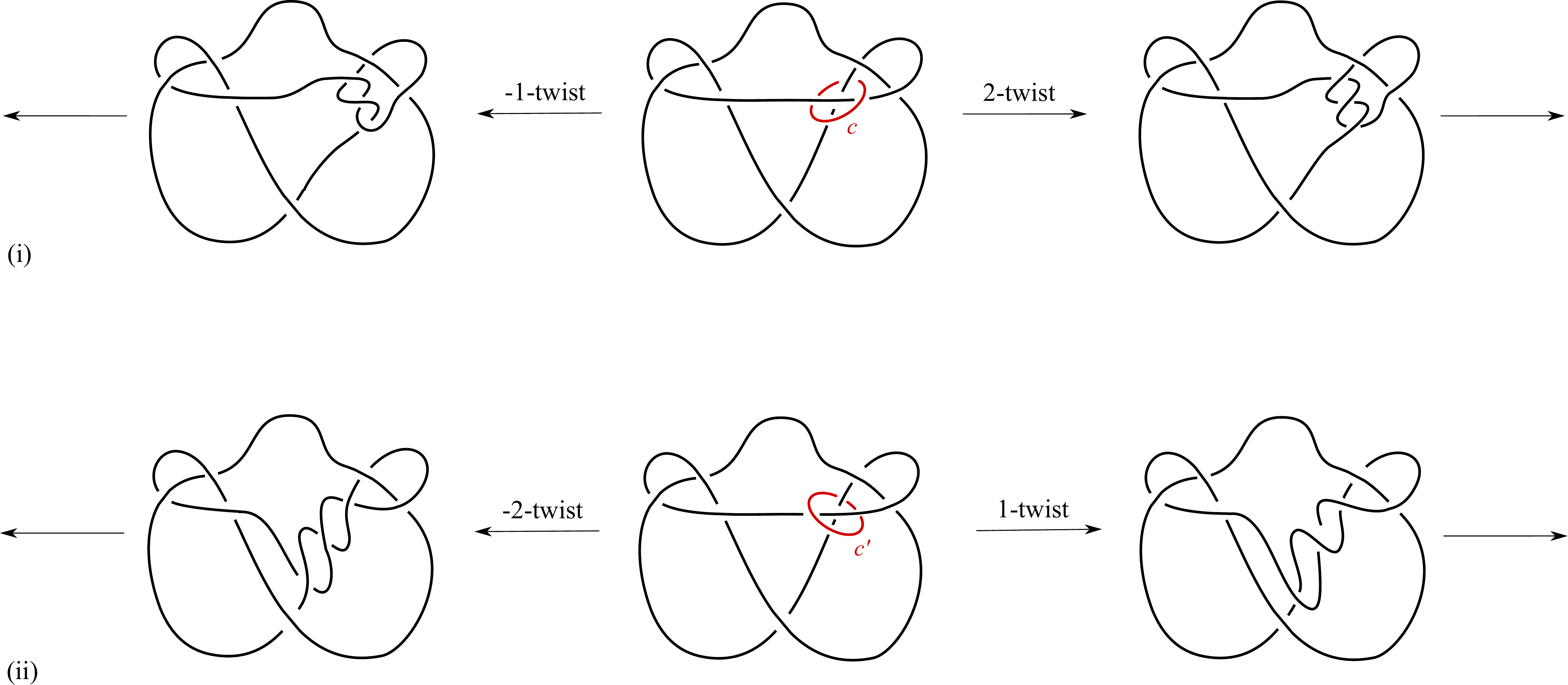}
\caption{Two twist families $\{ K_{c, n} \}$ and $\{ K_{c', n} \}$; The knot $K = K_{c, 0} = K_{c', 0}$ is an unknot.} 
\label{wrap2_twist_2}
\end{figure}
\end{example}

\bigskip
\textbf{Acknowledgments}\quad 

KLB has been partially supported by the Simons Foundation gift \#962034.
He also thanks the University of Pisa for their hospitality where part of this work was done.

KM has been partially supported by JSPS KAKENHI Grant Number JP19K03502, 21H04428 and Joint Research Grant of Institute of Natural Sciences at Nihon University for 2023.

\bigskip


\begin{thebibliography}{99}


\bibitem{Alex}
J.W. Alexander; 
A lemma on systems of knotted curves, 
Prc.\ Nat.\ Acad.\ Sci.\ USA \textbf{9} (1923), 93--95.



\bibitem{BM}
K.L. Baker, K. Motegi;
Seifert vs slice genera of knots in twist families and a characterization of braid axes, 
Proc. London. Math. Soc. 119 (2019), 1493--1530. 


\bibitem{BMT_Mazur}
K.L. Baker, K. Motegi and T. Takata;
The Strong Slope Conjecture and crossing numbers for Mazur doubles of knots, 
to appear in Springer Proceedings in Mathematics \& Statistics, "Low Dimensional Topology and Number Theory". 

\bibitem{BT}
K.L. Baker and S.A. Taylor; 
Dehn filling and the Thurston norm, 
J.\ Differential Geom.\ \textbf{112} (2019), 391--409. 



\bibitem{BiMe}
J. Birman and W. Menasco;
Special positions for essential tori in link complements,
Topology \textbf{33} (1994) 525--556.








\bibitem{Diao}
Y. Diao;
The additivity of crossing numbers,
J. Knot Theory Ramifications \textbf{13} (2004), 857--866. 

\bibitem{FW}
J. Franks and R. F. Williams;
Braids and the {J}ones polynomial,
Trans.\ Amer.\ Math.\ Soc.\ \textbf{303} (1987), 97--108.


\bibitem{FH}
M. Freedman and Z.-X. He; 
Divergence-free fields: energy and asymptotic crossing number, 
Ann.\ of Math.\ \textbf{134} (1991), 189--229.


\bibitem{He}
Z. He;
On the crossing number of high degree satellites of hyperbolic knots,
Math.\ Res.\ Lett.\ \textbf{5} (1998), 235--245.

\bibitem{Ito}
T. Ito; 
A quantitative Birman-Menasco finiteness theorem and its application to crossing number, 
J. Topology \textbf{15} (2022), 1794--1806.




\bibitem{KL}
E. Kalfagianni and C.R.S. Lee; 
Jones diameter and crossing number of knots, 
Adv.\ Math.\  \textbf{417} (2023), No. 108937. 




\bibitem{Kir}
R. Kirby; 
Problems in low-dimensional topology In Geometric Topology (Athens, GA, 1993),
35--473. 
AMS/IP Stud.\ Adv.\ Math.\ \textbf{2}(2). 
Providence, RI: American Mathematical Society,
1997.

\bibitem{KLO}
F. Swenton;
Knot-like objects (KLO) software,
{\tt http://KLO-Software.net}



\bibitem{KMS}
M. Kouno, K. Motegi and T. Shibuya; 
Twisting and knot types, 
J.\ Math.\ Soc.\ Japan \textbf{44} (1992), 199--216.




\bibitem{Lac_sum}
M. Lackenby;
The crossing number of composite knots,  
J.\ Topology \textbf{2} (2009), 747--768. 

\bibitem{Lac_sate}
M. Lackenby;
The crossing number of satellite knots 
Algebr.\ Geom.\ Top.\ \textbf{14} (2014) 499--529. 




\bibitem{Lickorish}
W.B.R. Lickorish;
An Introduction to Knot Theory,
Graduate Texts in Mathematics \textbf{175} (1997).




\bibitem{Morton}
H. R. Morton; 
Seifert circles and knot polynomials,
Math.\ Proc.\ Cambridge Philos.\ Soc.\ \textbf{99} (1986) 107--109.







\bibitem{Nutt}
I.J. Nutt;
Embedding knots and links in an open book. {III}. {O}n the braid index of satellite links,
Math.\ Proc.\ Cambridge Philos.\ Soc.\ \textbf{126} (1999), 77--98.




\bibitem{Ohyama}
Y. Ohyama;
On the minimal crossing number and the braid index of links,
Canad.\ J.\ Math.\ \textbf{45} (1993), 117--131.









%

\bibitem{Y}
S. Yamada; 
The minimal number of Seifert circles equals the braid index of a link, 
Invent.\ math.\ \textbf{89} (1987), 347--356. 

\end{thebibliography}
\end{document}